\title{Enumeration and randomized constructions of hypertrees}
\author{Nati Linial\thanks{Department of Computer Science, Hebrew University, Jerusalem 91904,
    Israel. e-mail: nati@cs.huji.ac.il~. Supported by ERC grant 339096 "High-dimensional combinatorics".} \and  {Yuval Peled\thanks{Department of Computer Science, Hebrew University, Jerusalem 91904,
    Israel. e-mail: yuvalp@cs.huji.ac.il~. YP is grateful to the Azrieli foundation for the award of an Azrieli Fellowship.}
}}
\date{\today}
\newcommand{\ignore}[1]{}
\newcommand{\R}{\ensuremath{\mathbb R}}
\newcommand{\Z}{\ensuremath{\mathbb Z}}
\newcommand{\Q}{\ensuremath{\mathbb Q}}
\newcommand{\N}{\ensuremath{\mathbb N}}
\newcommand{\E}{\ensuremath{\mathbb E}}
\newcommand{\F}{\ensuremath{\mathbb F}}
\newcommand{\Tnd}{\ensuremath{\mathcal T_{n,d}}}
\newcommand{\Cnd}{\ensuremath{\mathcal C_{n,d}}}
\newcommand{\Cndvar}[2]{\ensuremath{\mathcal C_{#1,#2}}}
\newtheorem{theorem}{Theorem}[section]
\newtheorem{lemma}[theorem]{Lemma}
\newtheorem{claim}[theorem]{Claim}
\newtheorem{corollary}[theorem]{Corollary}
\newtheorem{fact}[theorem]{Fact}
\newtheorem{conjecture}[theorem]{Conjecture}
\newtheorem{definition}[theorem]{Definition}
\newtheorem{question}[theorem]{Question}
\newtheorem{remark}[theorem]{Remark}
\def \dim {{\rm dim}}
\def \im {{\rm Im}}
\def \ker {{\rm ker}}
\def \rank {{\rm rank}}
\def \LMc {Y_d\left(n,\frac cn\right)}
\def \SH {\mbox{SH}}
\begin{document}
\maketitle
\begin{abstract}
Over thirty years ago, Kalai proved a beautiful $d$-dimensional analog of Cayley's formula for the number of $n$-vertex trees. He enumerated $d$-dimensional hypertrees weighted by the squared size of their $(d-1)$-dimensional homology group. This, however, does not answer the more basic problem of unweighted enumeration of $d$-hypertrees, which is our concern here. Our main result, Theorem \ref{thm:1}, significantly improves the lower bound for the number of $d$-hypertrees. In addition, we study a random $1$-out model of $d$-complexes where every $(d-1)$-dimensional face selects a random $d$-face containing it, and show it has a negligible $d$-dimensional homology.
\end{abstract}
\section{Introduction}


Trees are among the most fundamental objects in discrete mathematics and computer science, as documented in innumerable theoretical and applied studies. As part of our ongoing research in {\em high-dimensional combinatorics}, we study here {\em high-dimensional trees}.
In graph theory, a {\em tree} is characterized by being {\em connected} and {\em acyclic}. Since both these properties are topological in nature, it makes sense to consider them in higher dimensional simplicial complexes as well. This was indeed done over thirty years ago in a beautiful paper by Kalai \cite{kalai}.

From a topological perspective, a graph is a $1$-dimensional simplicial complex. Also, connectivity and cycles in graphs are expressible in the language of {\em simplicial homology}. Namely, connectivity is the vanishing of the zeroth homology and cycles are elements of the graph's first homology. Kalai's definition applies the same line of thought to $d$-dimensional complexes regarding the $(d-1)$-st and $d$-th homology:
\begin{definition}
A {\em $d$-hypertree} $T$ is a $d$-dimensional simplicial complex with a full $(d-1)$-skeleton such that both $H_{d-1}(T;\Q)=0$ and $H_{d}(T;\Q)=0$.
\end{definition}
Recall that a $d$-dimensional simplicial complex ($d$-complex in short) has a full $(d-1)$-skeleton if it contains all the faces of dimension less than $d$ that are spanned by its vertex set. In this paper, unless stated otherwise, all $d$-complexes are assumed to have a full $(d-1)$-skeleton, and we sometimes identify a $d$-complex with its set of $d$-dimensional faces. Note also that it makes sense to consider a similar notion of a hypertree where $\Q$ is replaced by a different commutative ring of coefficients. However, this would yield a different class of complexes, and unless stated otherwise, we stick to $\Q$-acyclic complexes i.e., $d$-hypertrees over $\Q$.

The notion of a $d$-hypertree is expressible as well in terms of elementary linear algebra, and specifically the {\em boundary operator} (or matrix) $\partial_d$ that maps a $d$-face to the linear sum of its $(d-1)$-subfaces. A $d$-hypertree is a set of $d$-faces whose corresponding columns in $\partial_d$ form a basis for the column space of $\partial_d$.
For example, the $d$-dimensional star, which is comprised of the $d$-faces that contain a specific vertex, is a $d$-hypertree. Clearly, an $n$-vertex $d$-hypertree has exactly $\binom{n-1}d$ $d$-dimensional faces.

One of the earliest nontrivial discoveries about trees is Cayley's formula, which states that the number of trees on $n$ labeled vertices is $n^{n-2}$. Kalai found a beautiful generalization of this formula in higher dimensions. Let $\Tnd$ denote the family of $n$-vertex $d$-hypertrees.
\begin{theorem}
\label{thm:kalai}
Let $d<n$ be integers. Then,
\[
\sum_{T\in\mathcal T_{n,d}} |H_{d-1}(T;\Z)|^2 = n^{\binom{n-2}d}.
\]
\end{theorem}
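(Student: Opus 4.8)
The plan is to prove Kalai's formula by a simplicial matrix–tree argument on the complete $d$-complex $K_n^{(d)}$ (homology is taken reduced, which only matters when $d=1$). Fix the vertex $1$, let $V^*$ be the set of $(d-1)$-faces not containing $1$, so $|V^*|=\binom{n-1}{d}=:N$, and let $M$ be the matrix obtained from $\partial_d$ by keeping only the rows indexed by $V^*$; thus $M$ has $N$ rows and $\binom{n}{d+1}$ columns, one per $d$-face. The structural input is the cone contraction $h$ with apex $1$, which sends a $(d-1)$-face $\sigma$ to $\{1\}\cup\sigma$ when $1\notin\sigma$ and to $0$ otherwise, and which satisfies $\partial_d h+h\partial_{d-1}=\operatorname{id}$ on $C_{d-1}(K_n^{(d)})$. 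Two consequences I would extract: first, every integral $(d-1)$-cycle $z$ equals $\partial_d(h(z))$, so $Z_{d-1}(\Z)=\im\partial_d$ over $\Z$; second, the projection $\pi\colon Z_{d-1}(\Z)\to\Z^{V^*}$ that forgets the coordinates of faces containing $1$ is an isomorphism of lattices — it is onto because $\pi\bigl(\partial_d(\{1\}\cup\tau)\bigr)=e_\tau$ for each $\tau\in V^*$, and injective because a cycle $z$ supported on faces through $1$ satisfies $h(z)=0$, hence $z=\partial_d(h(z))=0$.

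Granting that $\pi$ is a lattice isomorphism, one gets the familiar dictionary between hypertrees and nonsingular minors: a set $S$ of $d$-faces is a $d$-hypertree exactly when $|S|=N$ and the $N\times N$ submatrix $M_S$ of $M$ on the columns of $S$ is invertible, and then, writing $T$ for the complex with $d$-face set $S$,
\[
|H_{d-1}(T;\Z)|=\bigl[\,Z_{d-1}(\Z):\im(\partial_d|_S)\,\bigr]=\bigl[\,\Z^{V^*}:\im M_S\,\bigr]=|\det M_S|.
\]
Since any $N$-set of $d$-faces that is not a hypertree has $\det M_S=0$, the Cauchy–Binet formula yields
\[
\sum_{T\in\Tnd}|H_{d-1}(T;\Z)|^2=\sum_{|S|=N}(\det M_S)^2=\det(MM^T),
\]
and it remains to evaluate the $N\times N$ matrix $MM^T$.

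Here completeness makes everything collapse. Split the columns of $M$ by whether the $d$-face contains $1$. The columns through $1$ are the $\{1\}\cup\tau$ with $\tau\in V^*$, and by the identity $\pi(\partial_d(\{1\}\cup\tau))=e_\tau$ the corresponding $N\times N$ block $M_1$ is the identity, so $M_1M_1^T=I$. The remaining block $M_2$ has rows indexed by $(d-1)$-faces of $\{2,\dots,n\}$ and columns by $d$-faces of $\{2,\dots,n\}$ — that is, $M_2$ is precisely the boundary matrix $\partial_d$ of the complete $d$-complex $K_{n-1}^{(d)}$. Hence $MM^T=I+M_2M_2^T=I+L$, where $L=\partial_d\partial_d^T$ is the upper $(d-1)$-Laplacian of $K_{n-1}^{(d)}$. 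To evaluate $\det(I+L)$ I would use the elementary identity $\partial_d\partial_d^T+\partial_{d-1}^T\partial_{d-1}=(n-1)I$ on $C_{d-1}(K_{n-1}^{(d)})$, a short verification of the diagonal entries and of the off-diagonal cancellation (the latter being the same sign bookkeeping that makes $\partial^2=0$). On $\im\partial_d=Z_{d-1}(K_{n-1}^{(d)})$ the term $\partial_{d-1}^T\partial_{d-1}$ vanishes, so $L$ acts there as $(n-1)I$, while $L$ vanishes on the orthogonal complement; thus $L$ has eigenvalue $n-1$ with multiplicity $\dim\im\partial_d=\binom{n-2}{d}$ — the number of $d$-faces of a $d$-hypertree on $n-1$ vertices — and eigenvalue $0$ with multiplicity $\binom{n-1}{d}-\binom{n-2}{d}=\binom{n-2}{d-1}$. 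Therefore $\det(MM^T)=\det(I+L)=n^{\binom{n-2}{d}}$, which is Kalai's formula.

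The main obstacle is the torsion identification $|H_{d-1}(T;\Z)|=|\det M_S|$ of the first two paragraphs. In the graph case every minor of the reduced incidence matrix is $0$ or $\pm1$, so the enumeration is unweighted; in higher dimensions the minors genuinely record $|H_{d-1}(\cdot;\Z)|$, and it is exactly this that forces the $|H_{d-1}|^2$ weighting in the theorem. The cone contraction $h$ is the device that simultaneously trivializes $\tilde H_{d-1}(K_n^{(d)})$ and produces the lattice isomorphism $\pi$ that makes the identification precise; once it is in place, the rest is Cauchy–Binet together with the standard spectral computation for the complete complex.
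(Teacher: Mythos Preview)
The paper does not prove Theorem~\ref{thm:kalai}; it is quoted from Kalai's original paper~\cite{kalai} with only the remark that ``Kalai's argument is a high-dimensional extension of the Matrix-Tree Theorem.'' Your proposal is a correct and complete rendition of precisely that argument---restrict $\partial_d$ to the rows indexed by $(d-1)$-faces avoiding a fixed vertex, identify the torsion of a hypertree with the corresponding minor via the cone contraction, apply Cauchy--Binet, and evaluate $\det(MM^T)$ using the spectrum of the complete-complex Laplacian---so there is nothing in the present paper to compare it against, but it matches Kalai's original proof in structure and detail.
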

Recall that $H_{d-1}(T;\Z)$, the {\em torsion} of $T$, is a finite group for every $d$-hypertree $T$. For $d=1$, Kalai's formula reduces to Cayley's formula, since the integral $0$-th homology is always torsion-free. In fact, Kalai's argument is a high-dimensional extension of the Matrix-Tree Theorem.

In the one-dimensional case of graphs, trees can also be defined by the combinatorial notion of collapsibility. An elementary $1$-collapse in a graph is the removal of a leaf (= a vertex of degree one) and the unique edge that contains it. An $n$-vertex graph with $n-1$ edges is a tree if and only if it is collapsible, i.e., it can be reduced to a single vertex by a sequence of elementary $1$-collapses. 

This definition of trees generalizes to a notion of {\em $d$-collapsible $d$-hypertrees}. A $(d-1)$-face $\tau$ in a simplicial complex $X$ is said to be {\em exposed} if there is exactly one $d$-face $\sigma$ in $X$ that contains it. In the {\em elementary $d$-collapse} on $\tau$, we remove $\tau$ and $\sigma$ from $X$. Note that the remaining complex is homotopy equivalent to $X$. We say that $X$ is {\em $d$-collapsible} if it is possible to eliminate all its $d$-faces by a series of elementary collapses. In particular, $d$-collapsibility implies the vanishing of the $d$-th homology. Therefore, a $d$-collapsible $d$-complex with $\binom{n-1}d$ $d$-faces is a $d$-hypertree. Such a complex is called a {\em $d$-collapsible $d$-hypertree}. 

While all $1$-dimensional trees are collapsible, it is conjectured that in higher dimension the situation is entirely different. We denote by $\Cnd$ the set of all $n$-vertex $d$-collapsible hypetrees.
\begin{conjecture}
\label{conj:main}
For every $d\ge 2$, asymptotically almost none of the $d$-hypertrees are $d$-collapsible. Namely, \mbox{$|\Cnd|/|\Tnd| \to 0$}, as $n\to\infty$.
\end{conjecture}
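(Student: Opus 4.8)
The natural line of attack is to pit an upper bound on $|\Cnd|$ against a lower bound on $|\Tnd|$, and take the ratio. For the lower bound we would invoke Theorem \ref{thm:1} (the improved count of $d$-hypertrees, whose proof should feed on the random $1$-out model and its negligible $d$-th homology), hoping that it delivers something of the order $|\Tnd|\ge n^{(1-o(1))\binom{n-2}d}$, i.e.\ matching Kalai's weighted count of Theorem \ref{thm:kalai} up to lower-order terms. The entire burden then falls on producing a sufficiently small upper bound for $|\Cnd|$. I would encode each $d$-collapsible hypertree by a collapse sequence, or rather by its reverse: an inflation sequence that starts from the $(d-1)$-skeleton of the final core and adds the $\binom{n-1}d$ pairs $(\tau_i,\sigma_i)$ one at a time, where at step $i$ the $(d-1)$-face $\tau_i$ is new while the remaining $d$ facets of the $d$-face $\sigma_i$ already lie in the current complex. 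Since $\sigma_i$ is determined by $\tau_i$ together with one already-present facet, the number of choices at step $i$ is governed by the number of $(d+1)$-sets having $d$ of their facets among the $\approx i$ facets present so far; a Kruskal--Katona / convexity estimate on the degrees of $(d-2)$-faces bounds this quantity, and one multiplies over the $\binom{n-1}d$ steps. It is worth noting in passing that Kalai's weighted formula is useless here: a $d$-collapsible hypertree is homotopy equivalent to its $(d-1)$-dimensional core, hence torsion-free, so it contributes weight exactly $1$ to the sum in Theorem \ref{thm:kalai}; the weighting does not separate $\Cnd$ from $\Tnd$.

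The crux — and the reason the statement is a conjecture rather than a theorem — is that the naive version of this count overshoots badly. The number of inflation sequences works out to roughly $n^{(d+1+o(1))\binom{n-2}d}$, an exponent larger than that of the trivial bound $|\Tnd|\le n^{\binom{n-2}d}$ by a factor of about $d+1$. One would like to recover this factor by dividing by the number of collapse orderings of a given hypertree; the number of inflation sequences equals $\sum_{X\in\Cnd}(\text{\# collapse orders of }X)\ge|\Cnd|$, and if every $X$ admitted $\approx\binom{n-1}d!$ collapse orders we would conclude $|\Cnd|\lesssim n^{(1+o(1))\binom{n-2}d}$ — just barely non-informative. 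But this is false: a collapsible hypertree can contain ``rigid'' pieces admitting an essentially unique collapse, so one cannot divide by anything close to $\binom{n-1}d!$ uniformly over $\Cnd$.

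Consequently the real task is a genuinely sharper bound on $|\Cnd|$ — for instance a Pr\"ufer-type injective encoding of $d$-collapsible hypertrees into a set of size $n^{(1-\varepsilon)\binom{n-2}d}$, or a localization principle showing the collapse process is driven by bounded-size local data, so that the entropy of $\Cnd$ falls strictly below that of $\Tnd$ — to be combined with a matching lower bound on $|\Tnd|$. I expect closing this entropy gap, rather than the lower bound on $|\Tnd|$ (which Theorem \ref{thm:1} already addresses), to be the main obstacle.
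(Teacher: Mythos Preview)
This statement is labelled \emph{Conjecture} in the paper, and the paper does \emph{not} prove it; there is no ``paper's own proof'' to compare against. Your proposal is not a proof either, and you say so yourself: you sketch the obvious strategy (upper-bound $|\Cnd|$, lower-bound $|\Tnd|$, compare), explain why the naive encoding of collapse sequences overshoots, and correctly conclude that the missing ingredient is a genuinely sharper upper bound on $|\Cnd|$. That diagnosis is accurate and matches the paper's own discussion: right after Theorem~\ref{thm:1} the authors speculate that perhaps $|\Cnd|$ falls below the lower bound of Theorem~\ref{thm:1}, and they pose the quantitative version $|\Cnd|/|\Tnd|=e^{-\Omega(n^d)}$ as an open question. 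So your assessment of the state of affairs is essentially the same as the paper's.

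Two small corrections. First, Theorem~\ref{thm:1} is \emph{not} proved via the random $1$-out model; its proof (Section~\ref{sec:bounds}) runs the random $d$-acyclic complex process and bounds the entropy of the resulting ordered hypertree using the shadow estimates for the Linial--Meshulam model $Y_d(n,c/n)$. The $1$-out complex $S_d(n,1)$ is a separate thread (Section~\ref{sec:1out}, Theorems~\ref{thm:1out} and~\ref{thm:1eps}) and does not feed into the enumeration bound. Second, Theorem~\ref{thm:1} gives $|\Tnd|\ge \bigl((1-o(1))\tfrac{e^{1+\alpha_d}}{d+1}n\bigr)^{\binom{n-1}{d}}$ with $\alpha_d<0$, which is strictly below the trivial upper bound $(en/(d+1))^{\binom{n-1}{d}}$ by an exponential factor; it does not match Kalai's weighted count up to lower-order terms, so even if one had $|\Cnd|\le n^{(1-o(1))\binom{n-2}{d}}$ exactly, comparing it to Theorem~\ref{thm:1} would not close the gap without further work.
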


Kalai's formula has motivated several other results of torsion-related weighted enumeration of hypertrees \cite{Duv,Adin,Lyons}. But, despite its remarkable beauty, the formula leaves a substantial gap regarding the question of {\em unweighted enumeration of (labeled) $d$-hypertrees}. Here are the bounds that are mentioned in \cite{kalai}:
\[
 \left( \frac {n}{d+1}\right) ^ {\binom {n-1}d}<|\Tnd| < \left( \frac {e}{d+1}\cdot n\right) ^ {\binom {n-1}d}.
\]
As mentioned, the number of $d$-faces in a $d$-hypertree is $\binom {n-1}d$, and the upper bound only considers the number of ways to  select them from among the total of $\binom{n}{d+1}$. The lower bound follows from the identity in Theorem \ref{thm:kalai} and an upper bound of the size of the torsion of a $d$-hypertree. Our analysis in Section \ref{sec:bounds} yields an elementary proof of this lower bound. 

In fact, the following simple inductive construction, suggested to us by Gil Kalai, yields a better lower bound even for collapsible hypertrees. Let $S$ be a $(d-1)$-collapsible hypertree with vertex set $[n]$, and let $vS=\{v\tau~|~\tau\in S\}$ be a simplicial cone over $S$, where $v$ is a new vertex. Let $T$ be a $d$-collapsible hypertree on $[n]$. The union $T\cup(vS)$ is a $d$-collapsible hypertree on $[n]\cup\{v\}$. Indeed, extend first the $(d-1)$-collapse of $S$ to the cone $vS$ and then collapse $T$.

Consequently, $|\Cndvar{n+1}{d}|\ge|\Cndvar{n}{d} | |\Cndvar{n}{d-1}|$, and we may proceed by induction on $n$ and $d$ to show that for every $d\ge 2$, 
\begin{equation}
\label{eqn:cnd}
|\Cnd|\ge \left(\frac n{e^{\frac{1}{2}+\frac{1}{3}+\cdots+\frac{1}{d}}}\right)^{\binom{n-2}d} \ge \left( \frac{e^{1-\gamma}}{d+1}\cdot n\right) ^ {\binom {n-2}d},
\end{equation}
where $\gamma\approx 0.577$ is the Euler-Mascheroni constant.
For the induction step one needs to show that 
\[
\left(\frac {n}{e^{\frac{1}{2}+\frac{1}{3}+\cdots+\frac{1}{d}}}\right)^{\binom{n-2}d} 
  \left(\frac {n}{e^{\frac{1}{2}+\frac{1}{3}+\cdots+\frac{1}{d-1}}}\right)^{\binom{n-2}{d-1}}
\ge \left(\frac {n+1}{e^{\frac{1}{2}+\frac{1}{3}+\cdots+\frac{1}{d}}}\right)^{\binom{n-1}d}.
\]
To prove this step and to derive the right inequality in Equation (\ref{eqn:cnd}) use the fact that $e>(1+1/t)^{t}$ for every positive integer $t$.

Our main theorem improves the lower bound on $|\Tnd|$.
\begin{theorem}
\label{thm:1}
Let $t=t_d^*$ be the unique root in $(0,1)$ of
\[
(d+1)(1-t)+(1+dt)\ln t=0,
\]
and
\[
0>\alpha_d=\frac{1}{d+1} \int_{0}^{t_d^*}\frac{ (1-(1-y)^{d+1})\cdot \log{(1-(1-y)^{d+1})}\cdot(1-y+d\cdot y\log{y} ) }{y(1-y)^{d+1}}  dy.
\]
Then,
$$|\Tnd|\ge  \left( (1-o_n(1)) \frac{e^{1+\alpha_d}}{d+1}\cdot n\right) ^ {\binom {n-1}d}.$$ 


\end{theorem}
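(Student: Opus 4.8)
The plan is to estimate $|\Tnd|$ from below by a random process that grows a $d$-hypertree one face at a time, maintaining at each stage a partial hypertree — i.e.\ a collection of $d$-faces whose columns in $\partial_d$ are linearly independent. If at a given step we have already chosen $k$ faces and $f$ of the $(d-1)$-faces are still \emph{free} (not yet in the boundary of any chosen face, or more precisely not yet ``used up'' by the collapsing structure we carry along), then any $d$-face meeting a free $(d-1)$-face in an exposed manner can be added while preserving linear independence. Counting the number of extensions at each step and taking the product gives a lower bound of the form $\prod_{k}(\text{number of valid choices at step }k)$, divided by $\binom{n-1}{d}!$ to account for the ordering, since each hypertree is produced in at most $\binom{n-1}{d}!$ orders. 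The exponent $\binom{n-1}{d}$ is forced, so the whole game is to show the geometric-mean number of choices per step is $(1-o(1))\,\frac{e^{1+\alpha_d}}{d+1}\,n$.

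The key technical device is to track, as a function of the fraction $y$ of $(d-1)$-faces that have been ``consumed,'' the number of $(d-1)$-faces that lie on the boundary of \emph{exactly one} chosen $d$-face, i.e.\ the current exposed faces — these are what permit a collapsible extension. First I would set up a differential-equation / continuous heuristic: when a random new $d$-face is added, it contributes $d+1$ boundary $(d-1)$-faces, each of which is ``new'' with probability roughly $(1-y)$, so the number of consumed faces increases, and the probability that the new face is attachable in a collapsible way relates to $1-(1-y)^{d+1}$ (the chance that at least one of its subfaces has already been touched). Translating the per-step count of valid faces into an integrand and summing, the logarithm of the product becomes a Riemann sum converging to $\binom{n-1}{d}$ times $\bigl(1 + \int (\cdots)\,dy\bigr)$, and the bracket in the definition of $\alpha_d$, namely $1-y+d\,y\log y$, is exactly the logarithmic derivative governing how fast the free-face count shrinks — its root at $t_d^*$ is where the process must be stopped because beyond it the supply of collapsible extensions runs dry and one must switch to a cruder counting argument for the remaining faces. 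The factor $\frac{1}{d+1}$ and the $\log(1-(1-y)^{d+1})$ and $y(1-y)^{d+1}$ terms in the integrand all arise from these combinatorial probabilities; I would derive them carefully and then recognize the resulting integral as $\alpha_d$.

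Concretely, the steps in order are: (1) define the notion of a partial collapsible hypertree together with a ``free'' set of $(d-1)$-faces, and prove that adding a $d$-face all of whose boundary subfaces but one are already consumed keeps the columns independent; (2) run the greedy/random growth, and at step $k$ lower-bound the number of legal new faces by a function of the current free-face count $f_k$; (3) show concentration of $f_k$ around its trajectory $f_k \approx \binom{n-1}{d}(1-y)$ with $y = y(k)$ determined by a balance equation — here a Wormald-type differential-equation argument or a direct second-moment / martingale estimate is needed; (4) convert $\sum_k \log(\text{choices at step }k)$ into the integral and identify it with $\binom{n-1}{d}(1+\alpha_d+o(1))$, stopping the integration at $t_d^*$; (5) handle the ``tail'' — the last $o\bigl(\binom{n-1}{d}\bigr)$ faces after the collapsible supply is exhausted — by the trivial bound that each remaining face has at least one choice, which costs nothing in the exponent; (6) divide by $\binom{n-1}{d}!$ and simplify using Stirling, noting that $\binom{n-1}{d}! \approx \bigl(\binom{n-1}{d}/e\bigr)^{\binom{n-1}{d}}$ and $\binom{n-1}{d}\approx n^d/d!$, so the $e$ in the numerator of the claimed bound is precisely the $e$ from Stirling and the $\frac{1}{d+1}$ is $\frac{n^d/d!}{n^{d+1}/(d+1)!}$ up to lower-order terms.

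The main obstacle I anticipate is step (3): controlling the random trajectory of the free-face count (and more generally of the ``degree profile'' of $(d-1)$-faces — how many lie on $0$, $1$, $2$, \dots chosen $d$-faces) tightly enough that the per-step lower bound on the number of legal extensions, when multiplied over $\Theta(n^d)$ steps, loses only a $(1-o(1))^{\binom{n-1}{d}}$ factor. Because the exponent is $\binom{n-1}{d}$, we cannot afford even a constant-factor slack per step; we need the geometric mean of the choice-counts to match $\frac{e^{1+\alpha_d}}{d+1}n$ up to $1+o(1)$, which demands that the fluctuations of $f_k$ be $o\bigl(\binom{n-1}{d}\bigr)$ uniformly and that the rare bad events (where the process deviates or gets stuck early) be suppressed strongly enough not to pollute the product. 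Making the linear-algebraic step (1) work simultaneously with the collapsibility bookkeeping — ensuring that the faces we add really do remain independent and are not merely ``probably'' independent — is the second delicate point, but it is the concentration argument that carries the real weight.
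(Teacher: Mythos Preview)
Your outer framework---grow a $d$-acyclic complex one face at a time, lower-bound the number of legal extensions at each step, take the product, divide by $\binom{n-1}{d}!$, and simplify via Stirling---is exactly what the paper does (phrased there as an entropy inequality for the random acyclic process). The gap is in \emph{how} you lower-bound the number of extensions. You propose to count $d$-faces possessing a ``free'' $(d-1)$-subface and to track the free-face count by a differential-equation method; that is a collapsibility criterion, and a process that only adds such faces produces collapsible hypertrees, so at best it bounds $|\Cnd|$. The quantity that actually needs bounding is $|\overline{\SH}(T_i)|$, the number of $d$-faces whose boundary is not in the current span, and through most of the process this is far larger than your free-subface count. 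The paper's mechanism, which your plan omits entirely, is a coupling with the Linial--Meshulam model: since the random acyclic process retains exactly the ``critical'' faces from a random ordering of all $d$-faces, one has $T_i \subset Y_d(n,c/n)$ whenever $r(c)\binom{n-1}{d} > i$, hence $|\overline{\SH}(T_i)| \ge |\overline{\SH}(Y_d(n,c/n))| \approx \bar s(c)\binom{n}{d+1}$, where $r$ and $\bar s$ are the normalized rank and shadow-complement density of $Y_d(n,c/n)$ from \cite{LP}.

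Your reading of the integrand is accordingly off. The stated formula for $\alpha_d$ arises from $\alpha_d=\int_{c_d^*/(d+1)}^{1}\log\bar s(r^{-1}(x))\,dx$ by the substitution $y=t_c$, the smallest root of $t=e^{-c(1-t)^d}$; thus $y$ is a Linial--Meshulam phase-transition parameter, not the fraction of consumed $(d-1)$-faces. The endpoint $y=t_d^*$ corresponds to $c=c_d^*$, the \emph{start} of the supercritical regime, while $y\to 0$ corresponds to $c\to\infty$, the end of the process. The factor $1-y+dy\log y$ is the Jacobian coming from $dt/dc$ and does not vanish at $t_d^*$ (the defining equation for $t_d^*$ is $(d+1)(1-t)+(1+dt)\ln t=0$, a different expression). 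Most importantly, the supercritical range is a constant fraction of the process and is exactly where the entire contribution $\alpha_d$ comes from; it cannot be treated as an $o\bigl(\binom{n-1}{d}\bigr)$ tail with one choice per step---doing so would forfeit the improvement over the trivial bound. The paper handles it via the $\bar s(c)$ estimate above, reserving the crude worst-case bound (Claim~\ref{clm:worst_case}) only for a vanishing transition window and rearguard.
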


\begin{remark}
\begin{enumerate}
\item Theorem \ref{thm:1} offers an exponential improvement over Equation (\ref{eqn:cnd}) in every dimension. For example, for $d=2$ the lower bound is improved from approximately $(0.606\cdot n)^{n^2/2}$ to $(0.751\cdot n)^{n^2/2}$.
\item In addition, note that $\alpha_d\to 0$ as $d$ grows. Therefore, in contrast to the bound of Equation (\ref{eqn:cnd}), this lower bound on $|\Tnd|$ approaches Kalai's trivial {\em upper bound} of $\left(en/(d+1)\right)^{\binom{n-1}{d}}$ as $d$ grows.
\end{enumerate}
\end{remark}

It is interesting to speculate on whether Theorem \ref{thm:1} can help us prove Conjecture \ref{conj:main}, at least for large $d$. Namely, could it be that $|\Cnd|$ is even smaller than the lower bound of $|\Tnd|$ in the theorem? In particular, it is conceivable that $|\Cnd|$ does not approach the trivial upper bound as $d$ grows. This discussion naturally suggests the following quantitative version of Conjecture \ref{conj:main}
\begin{question}
Does it hold that $|\Cnd|/|\Tnd| = e^{-\Omega(n^d)}$ ? 
\end{question}
Put together, these two questions ask whether $|\Cnd|/|\Tnd|$ tends to zero and if so, whether the convergence is as fast as $e^{-\Omega(n^d)}$.

As we observe next, the upper bound of $|\Tnd|$ can be slightly improved.
\begin{theorem}\label{thm:upper}
For every dimension $d\ge 2$ there exists $\varepsilon_d>0$ such that
$|\Tnd| < \left( \frac {e-\varepsilon_d}{d+1}n\right) ^ {\binom {n-1}d}.$
\end{theorem}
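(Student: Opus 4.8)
The plan is to sharpen Kalai's trivial bound $|\Tnd|\le\binom{\binom{n}{d+1}}{\binom{n-1}{d}}$ by removing from the count all $\binom{n-1}{d}$-subsets of $d$-faces that fail a simple necessary condition for being a hypertree, namely that \emph{every $(d-1)$-face is contained in at least one of the chosen $d$-faces}. This is forced by the column-space characterization recalled above: if $T$ is a $d$-hypertree and $\tau$ is a $(d-1)$-face, fix any $d$-face $\sigma_0\supseteq\tau$ (possible since $d<n$). Then $\partial_d\sigma_0$ lies in the column space of $\partial_d$, hence is a linear combination of the columns indexed by $T$; comparing $\tau$-coordinates — that of $\partial_d\sigma_0$ being $\pm1$ — shows that some column of $T$ has a nonzero $\tau$-coordinate, i.e.\ some $d$-face of $T$ contains $\tau$. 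Therefore
\[
|\Tnd|\ \le\ \#\big\{S\subseteq\tbinom{[n]}{d+1}:\ |S|=\tbinom{n-1}{d},\ \text{every }(d-1)\text{-face lies in some member of }S\big\}=:Q_n .
\]

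To bound $Q_n$ I would take $S$ to be a uniformly random $k$-subset of the $N:=\binom{n}{d+1}$ $d$-faces, where $k:=\binom{n-1}{d}$, and set $U=U(S):=$ the number of $(d-1)$-faces that lie in no member of $S$. A fixed $(d-1)$-face $\tau$ has exactly $n-d$ cofaces, so $\Pr[\tau\ \text{uncovered}]=\binom{N-(n-d)}{k}/\binom{N}{k}=(1+o(1))e^{-(d+1)}$, and hence $\mu_n:=\E[U]=(1+o(1))e^{-(d+1)}\binom{n}{d}$, which is of order $n^d$. What is needed is that $U$ concentrates around $\mu_n$ tightly enough that $\Pr[U=0]$ is exponentially small \emph{in $n^d$}; this is exactly what converts the trivial bound (whose base is $e/(d+1)$ up to a $1-o(1)$ factor) into one with a strictly smaller base.

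For the concentration I would expose $S$ one $d$-face at a time, writing $S=\{Z_1,\dots,Z_k\}$ with $Z_t$ uniform among the as-yet-unchosen $d$-faces, and apply Azuma--Hoeffding to the Doob martingale $\E[U\mid Z_1,\dots,Z_t]$. Replacing a single chosen $d$-face changes the covered/uncovered status only of its $d+1$ facets and of the $d+1$ facets of the face it replaces, so each of the $k$ martingale increments is at most $2(d+1)$ in absolute value; consequently
\[
\Pr\!\big[U\le \tfrac12\mu_n\big]\ \le\ 2\exp\!\Big(-\tfrac{(\mu_n/2)^2}{2k\,(2(d+1))^2}\Big)\ =\ \exp\big(-\Omega(n^d)\big),
\]
because $\mu_n$ and $k$ are both of order $n^d$. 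Thus $Q_n=\binom{N}{k}\Pr[U=0]\le\binom{N}{k}\,e^{-\delta_d\binom{n}{d}}$ for a suitable constant $\delta_d>0$; combining this with $\binom{N}{k}\le(eN/k)^k=\big(\tfrac{e}{d+1}n\big)^k$ and $\binom{n}{d}\ge\binom{n-1}{d}=k$ yields $|\Tnd|\le\big(\tfrac{e\,e^{-\delta_d}}{d+1}\,n\big)^{\binom{n-1}{d}}$. Setting $\varepsilon_d:=e(1-e^{-\delta_d})>0$ (shrunk slightly so the inequality becomes strict, and so the finitely many small $n$ — handled by the strict form of Kalai's trivial bound — are absorbed) completes the proof.

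The one delicate point is the choice of exposure. Applying Chebyshev to $U$ gives only $\Pr[U=0]=O(n^{-d})$, and Janson's inequality (even the extended version) only $e^{-\Omega(n^{d-1})}$, because the uncovered-face events, although only weakly correlated, pairwise share cofaces in a way that inflates the relevant second-moment term. Likewise, running the martingale over the $N=\Theta(n^{d+1})$ positions of a random permutation of \emph{all} $d$-faces would give only $\exp(-\Omega(n^{2d}/N))=\exp(-\Omega(n^{d-1}))$. It is essential that the martingale has only $k=\Theta(n^d)$ steps, so that a deviation of $U$ of order its mean carries probability $\exp(-\Omega(\mu_n^2/k))=\exp(-\Omega(n^d))$; this is precisely the exponential-in-$n^d$ saving that pushes the base below $e/(d+1)$.
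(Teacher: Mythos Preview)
Your proof is correct and takes a genuinely different route from the paper's. Both arguments show that a uniformly random $\binom{n-1}{d}$-subset of $d$-faces is a $d$-hypertree with probability $\exp(-\Omega(n^d))$, and both close with an Azuma-type concentration over $\Theta(n^d)$ steps. The difference is in \emph{which} necessary condition is used to witness failure. The paper works with the rank: it embeds $Y_d\!\left(n,\binom{n-1}{d}\right)$ into $\tilde Y_d(n,M)$ built from $M=(1+\frac{\varepsilon}{d+1})\binom{n-1}{d}$ i.i.d.\ $d$-faces, invokes the result of~\cite{LP} that $r(d+1)<1$ to get $\E[\rank]\le(1-\delta)\binom{n-1}{d}$, and then applies Azuma to the $1$-Lipschitz rank over the $M$ independent draws. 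You instead use the purely combinatorial necessary condition that every $(d-1)$-face is covered, compute $\E[U]\sim e^{-(d+1)}\binom{n}{d}$, and run Azuma on the $k$-step exposure of the random $k$-subset.

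What each approach buys: yours is more elementary---it avoids any appeal to~\cite{LP} and needs only the coverage observation and a concentration inequality. The paper's rank-based argument, on the other hand, plugs directly into the machinery already set up for Theorem~\ref{thm:1} and in principle yields a larger $\varepsilon_d$ (the deficit $1-r(d+1)$ is typically much bigger than $e^{-(d+1)}$), though the theorem as stated only asks for existence. One small point worth making explicit in your write-up: because the $Z_t$'s are sampled without replacement they are not independent, so the $2(d+1)$ bound on the Doob increments is not literally McDiarmid's bounded-differences inequality; it follows from the standard swap-coupling (conditioning on $Z_t=z$ versus $Z_t=z'$, the remaining $(k-t)$-subsets can be coupled to differ in at most one element, so the resulting sets $S,S'$ satisfy $|S\triangle S'|\le 2$). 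You clearly have this in mind given your discussion of why the $N$-step permutation martingale is too weak, but spelling it out removes any doubt.
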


As is often the case with the study of large combinatorial objects, we have a rather limited supply of interesting $d$-acyclic (i.e., having a trivial $d$-th homology) complexes, and $d$-hypertrees in particular. It is typically hard to analyze the boundary matrices of complexes that arise from combinatorial and probabilistic constructions. Obvious exceptions are $d$-collapsible complexes which are $d$-acyclic due to a purely combinatorial reason. Notable non-collapsible examples are the sum complexes, introduced in \cite{sum_complexes}, whose boundary operator has a useful analytical structure. In~\cite{LP} we used the theory of local weak convergence   to bound the dimension of the $d$-homology of random Linial-Meshulam complexes. This approach can work only when the (bipartite) incidence graph of $(d-1)$-faces vs.\ $d$-faces is locally a tree, in the sense of local weak convergence. Here we use similar techniques to construct large random $d$-complexes with a tiny $d$-homology.

We define next a random model $S_d(n,1)$ of $n$-vertex $d$-complexes with full $(d-1)$-skeleton. In this model each $(d-1)$-dimensional face $\tau$ independently chooses a vertex $v \notin\tau$ to form the $d$-face $v\tau$. Multi-faces are not allowed, and every $d$-face that is chosen more than once is counted only once. This model extends the well-studied random $1$-out graph, that is used in Wilson's ``cycle-popping'' algorithm to uniformly sample spanning trees \cite{Wilson}. Random $k$-out graphs were also studied in several additional combinatorial contexts \cite{shamir,frieze,bohman}

As usual, we say that a property holds asymptotically almost-surely (a.a.s.) if its probability tends to $1$ as $n\to\infty$. 

The next theorem shows that the random $1$-out $d$-complex $S_d(n,1)$ typically has a small top homology.
\begin{theorem}\label{thm:1out}
For every $d\ge 2$, a.a.s.\ the homology $H_d(S_d(n,1);\Q)$ has dimension $o(n^d).$
\end{theorem}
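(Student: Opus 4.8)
The plan is to analyze $S_d(n,1)$ via local weak convergence, exactly as in \cite{LP}. Let $G$ be the bipartite incidence graph whose two sides are the $(d-1)$-faces and the $d$-faces of $S_d(n,1)$, with an edge joining $\tau$ to $\sigma$ whenever $\tau\subset\sigma$. Each $d$-face has exactly $d+1$ neighbors in $G$, while a $(d-1)$-face $\tau$ has as neighbors the $d$-face $v\tau$ it selected together with every other $(d-1)$-face $\tau'$ whose selected $d$-face contains $\tau$. The first step is to identify the local weak limit of $G$ rooted at a uniformly random vertex. Because in $S_d(n,1)$ every $(d-1)$-face makes an independent choice among $\sim n$ possible completing vertices, the number of $(d-1)$-faces that ``point at'' a given $\tau$ through a common $d$-face is asymptotically Poisson, and one expects the limit to be a unimodular Galton--Watson-type tree $T$ on the incidence graph: $d$-face nodes have exactly $d+1$ children, $(d-1)$-face nodes have a $\mathrm{Poisson}$ (with the appropriate mean, essentially $1$ after accounting for the $d$-face already attached) number of further $d$-face children. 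I would make this precise and prove convergence locally in probability, checking that multi-faces and short cycles are asymptotically negligible.

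The second step is to invoke the spectral/convergence machinery that relates $\dim H_d(S_d(n,1);\Q)$ to an invariant of the limiting object. Writing $M=\partial_d$ for the $d$-boundary matrix restricted to the $d$-faces that were actually selected, we have $\dim H_d = \dim\ker M$, and $\frac1{|F_d|}\dim\ker M$ — where $|F_d|$ is the number of $d$-faces, which is $\Theta(n^d)$ a.a.s. — converges (this is the content of the Lück approximation / convergence of spectral measures argument used in \cite{LP}) to the normalized $\ell^2$-Betti number, i.e.\ the expected von Neumann dimension of the kernel of the corresponding operator on the unimodular limit tree $T$. So it suffices to show that this limiting kernel dimension is $0$: then $\dim H_d(S_d(n,1);\Q)=o(|F_d|)=o(n^d)$ a.a.s., which is the claim. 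Establishing convergence of the spectral measure requires the usual second-moment/path-counting estimates showing that the empirical distribution of $M^\top M$ around a random $d$-face concentrates, and these go through because the incidence graph is locally tree-like.

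The third step — and the real crux — is to prove that the limiting operator on the Galton--Watson incidence tree $T$ has trivial kernel, i.e.\ that $T$ supports no nonzero $\ell^2$ element of $\ker\partial_d$. Here I would exploit the ``$1$-out'' structure: every $(d-1)$-face has at least one $d$-face attached via its own choice, and in a cycle $z=\sum c_\sigma\sigma\in\ker\partial_d$ the coefficient of any $(d-1)$-face that is a \emph{leaf} on the $(d-1)$-side (belongs to only one selected $d$-face in $T$) forces that $d$-face's coefficient to vanish. Iterating this ``peeling'' is precisely a $d$-collapse of the limit, and the point is to show that the limiting tree is a.s.\ \emph{collapsible from infinity} in the sense that this peeling process, run as a message-passing recursion, drives every coefficient to $0$ — equivalently, that the relevant ``core'' (the maximal subcomplex with no exposed $(d-1)$-face) is empty a.s. This reduces to analyzing a distributional fixed-point equation: letting $q$ be the probability that a $d$-face node survives the peeling, $q$ satisfies an equation of the form $q = f(q)$ coming from the Poisson offspring law, and one must show $q=0$ is the relevant solution for the subcritical-type branching numbers arising here (the expected number of surviving children being strictly less than what is needed to sustain an infinite surviving cluster). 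I expect this fixed-point analysis — ruling out a positive survival probability and, more delicately, controlling the $\ell^2$ norm so that even a measure-zero ``infinite core'' contributes nothing to the von Neumann dimension — to be the main obstacle, and it is where the techniques of \cite{LP} will have to be adapted most carefully to the $1$-out offspring distribution rather than the Linial--Meshulam one.
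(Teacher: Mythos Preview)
Your overall framework---identify the local weak limit of the incidence graph, then bound the normalized Betti number by the spectral measure of the limiting Laplacian at zero---is exactly the paper's strategy, and your remarks about convergence of spectral measures are in line with Lemma~\ref{lem:aux_lim}. Two points, however, need correction, and the second is a genuine gap.

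First, your description of the limit is not quite right. The local weak limit $\mathbf{B}_d$ is not a single-type Galton--Watson tree with Poisson offspring of mean ``essentially $1$''. Each $(d-1)$-face $\tau$ has its own selected $d$-face $\sigma_\tau$ \emph{plus} a $\mathrm{Poi}(d)$ number of additional $d$-faces (those selected by neighboring $(d-1)$-faces), and one must track a two-type structure: type-$(A)$ faces are those reached via the selection arrow $\tau\mapsto\sigma_\tau$, type-$(B)$ faces are those reached against it. Type-$(A)$ $(d-1)$-faces have offspring $1+\mathrm{Poi}(d)$; type-$(B)$ have $\mathrm{Poi}(d)$. The paper works out this limit explicitly.

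Second, and more seriously, your third step cannot succeed as written. You propose to show that the limiting tree is ``collapsible from infinity'', i.e.\ that the peeling process leaves an empty core, via a fixed-point equation whose relevant solution is $q=0$. But this is false. Every type-$(A)$ $(d-1)$-face has at least one child $d$-face (its selected one), all of whose children are again type $(A)$; inductively, no type-$(A)$ face is ever exposed, and the type-$(A)$ subtree is an infinite, positive-density core that survives peeling forever. So the survival probability is \emph{not} zero, and collapsibility fails. (This is already visible for $d=1$: the limit contains an infinite path, which is not collapsible.) Your hedge about ``controlling the $\ell^2$ norm so that even a measure-zero infinite core contributes nothing'' is not a side issue---it is the entire content of the theorem, and the core here is not even measure-zero.

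The paper's argument for this crucial step is completely different and does not go through collapsibility at all. It works directly with the quantity $x_B=\mu_{B,o}(\{0\})$ and the recursion of Lemma~\ref{lem:aux_ind}. Writing $X$ (resp.\ $Y$) for $x_B$ when the root is of type $(A)$ (resp.\ $(B)$), and $a=\Pr[X>0]$, $b=\Pr[Y>0]$, the recursion yields the relation $a=(1-(1-a)^d)b$. A careful manipulation of the recursion---using the Poisson identity $\E[m\,\varphi(m-1)]=d\,\E[\varphi(m)]$ and symmetry among the $d$ subface variables---then evaluates $\E[X]$ exactly and shows $\E[X]\le 0$. Since $X\ge 0$, this forces $\E[X]=0$, which via Lemma~\ref{lem:aux_lim} gives $\beta_{d-1}(S)=o(n^d)$ and hence $\beta_d(S)=o(n^d)$. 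The point is that the infinite type-$(A)$ spine, while obstructing collapse, supports no $\ell^2$ element of the kernel (just as an infinite path has no $\ell^2$ harmonic function); establishing this requires the spectral computation, not a combinatorial peeling.
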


We also show that almost all the $d$-cycles in $S_d(n,1)$ can be eliminated  by removing each $d$-face independently with probability $\varepsilon$, for an arbitrarily small $\varepsilon>0$. We denote this random complex by $S_d(n,1-\varepsilon)$. Note that such complexes can be sampled as follows. Initially, let each $(d-1)$-face be {\em active} independently with probability $1-\varepsilon$. Then, each active $(d-1)$-face selects a random vertex to form a $d$-face as in $S_d(n,1)$.

It turns out that a random $d$-complex in the $S_d(n,1-\varepsilon)$ model is almost $d$-acyclic, in the sense that the only $d$-cycles it has are $\partial\Delta_{d+1}$, i.e., a boundary of $(d+1)$-simplex. Such $d$-cycles appear with positive probability that is bounded away from $1$.

\begin{theorem}\label{thm:1eps}
Fix an integer $d\ge 2$ and $\varepsilon>0$, and let $S$ be a random complex from $S_d(n,1-\varepsilon)$.
Then, a.a.s.\ $H_d(S;\Q)$ is generated by $\partial\Delta_{d+1}$'s, the number of which is Poisson-distributed with a bounded parameter. 
\end{theorem}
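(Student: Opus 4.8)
The plan is to analyze the random complex $S = S_d(n,1-\varepsilon)$ through its incidence structure, following the local weak convergence approach from \cite{LP}. Consider the bipartite incidence graph $B$ whose two sides are the active $(d-1)$-faces and the $d$-faces they generate, with an edge between $\tau$ and $\sigma$ whenever $\tau\subset\sigma$. A $d$-cycle in $S$ corresponds to a $\Q$-linear dependence among the columns of $\partial_d$ restricted to the chosen $d$-faces, so $\dim H_d(S;\Q)$ equals the number of $d$-faces minus the rank of this restricted boundary matrix. The first step is to understand the component structure of $B$: since each active $(d-1)$-face selects exactly one $d$-face, $B$ locally resembles a Galton--Watson-type object, and I would show that a.a.s.\ every connected component of the ``core'' of $S$ (after peeling off the collapsible part) is small, and in fact is either acyclic or is a copy of $\partial\Delta_{d+1}$. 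The key structural claim is that the only small, $\Q$-acyclic-obstructing configurations that survive the collapse process in the sparse regime $1-\varepsilon$ are boundaries of $(d+1)$-simplices: any other would-be cycle requires a denser-than-tree local configuration whose expected count vanishes.

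Concretely, I would first run $d$-collapsibility on $S$, repeatedly removing exposed $(d-1)$-faces together with their unique $d$-face; this does not change $H_d$. The resulting core $S'$ has minimum degree at least $2$ in the incidence sense (every remaining $(d-1)$-face lies in $\ge 2$ remaining $d$-faces, or in none). Next, I would perform a first-moment computation: for the $S_d(n,1-\varepsilon)$ model, the expected number of ``excess'' structures --- subcomplexes on $k$ vertices whose number of $d$-faces exceeds the rank of their boundary by one and which are not a $\partial\Delta_{d+1}$ --- is $o(1)$, because each such subcomplex needs enough $(d-1)$-faces to have been active (a factor $(1-\varepsilon)^{(\text{number of }(d-1)\text{-faces})}$) and enough coincidences among the independent vertex choices, and the entropy count loses to this for every $k$ except the exact configuration $\partial\Delta_{d+1}$ on $k=d+2$ vertices. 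For $\partial\Delta_{d+1}$ itself, the number of occurrences is governed by the $\binom{n}{d+2}$ potential locations, each realized with probability $\Theta(n^{-(d+1)})\cdot(1-\varepsilon)^{\Theta(1)}$, which by the standard method of moments (Poisson paradigm for rare, weakly-dependent events) converges to a Poisson random variable with parameter bounded in $n$; here I would compute the $d$-faces of $\partial\Delta_{d+1}$ and check that the event ``these $d+2$ faces were all selected'' has the claimed probability, and that distinct potential copies are asymptotically independent.

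Putting these together: a.a.s.\ the core $S'$ is a disjoint union (in the cycle-space sense) of a $\Q$-acyclic part and finitely many vertex-disjoint $\partial\Delta_{d+1}$'s, so $H_d(S;\Q)\cong H_d(S';\Q)$ is generated by these $\partial\Delta_{d+1}$'s and has dimension equal to their number, which is Poisson with bounded parameter. The reduction from $S$ to $S'$ via collapses preserves the homotopy type and hence $H_d$, and the $\partial\Delta_{d+1}$'s survive the collapse because each of their $(d-1)$-subfaces of the supporting $(d+1)$-set lies in exactly two of the $d$-faces, so none is ever exposed (provided no extra $d$-face from outside attaches to them, which is part of the first-moment estimate). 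Finally, I would remark that these $\partial\Delta_{d+1}$'s do occur with probability bounded away from $0$ and from $1$, matching the theorem's phrasing.

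The main obstacle I anticipate is the first-moment estimate controlling \emph{all} potential homological obstructions at once. Unlike the pure collapsibility statement, we must rule out every $\Q$-cycle, including large ones spread over $\Theta(n)$ vertices, not merely local configurations; the subtlety is that the incidence graph of $S_d(n,1-\varepsilon)$ is sparse but not a forest, and a priori long cycles in the incidence graph could support $d$-cycles in the complex. The resolution is that a $d$-cycle forces, via $\partial_d$, a rigidity constraint much stronger than merely closing a loop in the incidence graph --- essentially it forces a subcomplex that is $\Q$-acyclic-obstructing, and such subcomplexes on $k$ vertices must contain at least $\binom{k-1}{d-1}+1$ faces in a constrained incidence pattern, whose expected count one shows is $o(1)$ summed over $k$ using $(1-\varepsilon)$-sparsity to kill the entropy. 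Making this union bound over all scales $k$ converge, with the right dependence of the constants on $\varepsilon$ and $d$, is the technical heart of the argument, and I expect it to parallel (but require more care than) the collapsibility analysis already present in the literature and in Section \ref{sec:bounds}.
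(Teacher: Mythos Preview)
Your proposal and the paper diverge on the crucial point you yourself flag as the main obstacle: ruling out large $d$-cycles. The paper does \emph{not} attempt a first-moment union bound over all scales $k$. Instead it splits the argument in two. For small cycles (size $\le \delta n^d$), it quotes the first-moment estimate from \cite{ALLM} on the number $Z(n,m,d)$ of inclusion-minimal $d$-cycles with $m$ faces, together with the bound $\Pr[\text{fixed }m\text{ faces}\subset S_d(n,1)]\le ((d+1)/n)^m$; this already shows that in $S_d(n,1)$ every small cycle is a $\partial\Delta_{d+1}$. For large cycles, the paper exploits Theorem~\ref{thm:1out}: $\beta_d(S_d(n,1))=o(n^d)$. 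It then views $S_d(n,1-\varepsilon)$ as $S_d(n,1)$ with $M=\Theta(n^d)$ faces deleted one at a time; at each step, if a large cycle is present, the removed face lies in it with probability $\Omega(1)$, so $\beta_d$ drops. Since we start with only $o(n^d)$ independent cycles and take $\Theta(n^d)$ steps, no large cycle survives a.a.s.

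Your approach of pushing the first moment through all $k$ is exactly what the paper avoids, and for good reason: the \cite{ALLM} bound on $\sum_m Z(n,m,d)((d+1)/n)^m$ is only established for $m\le \delta n^d$, and there is no off-the-shelf enumeration of inclusion-minimal cycles of macroscopic size that would make your union bound converge. The extra $(1-\varepsilon)^m$ factor you hope will ``kill the entropy'' is of the same exponential order as the entropy itself, so the computation is genuinely delicate and not covered by existing results. The missing idea in your outline is precisely the sequential-deletion reduction to Theorem~\ref{thm:1out}; once you have $\beta_d(S_d(n,1))=o(n^d)$, large cycles are dispatched in two lines and no global first moment is needed. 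Your collapsing step is also unnecessary for the paper's argument, and your treatment of the Poisson limit for $\partial\Delta_{d+1}$'s is fine and in line with the paper's (implicit) reasoning.
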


The rest of the paper is organized as follows. Section \ref{sec:back} contains some general background in simplicial combinatorics and basic facts on $d$-hypertrees. In Section \ref{sec:bounds} we prove the theorems regarding the enumeration of $d$-hypertrees, and Section \ref{sec:1out} is dedicated to the homology of the $1$-out random $d$-complex. Finally, we present various open questions in Section \ref{sec:open}.

\section{Background} 
\label{sec:back}
\subsection{Simplicial combinatorics}

A {\em simplicial complex} is comprised of a {\em vertex set} $V$ and a collection $X$ of subsets of $V$ that is closed under taking subsets. Namely, if $\sigma\in X$ and $\tau\subseteq \sigma$, then $\tau\in X$ as well. We usually refer to $X$ as the simplicial complex and call its members {\em faces} or {\em simplices}. The {\em dimension} of the simplex $\sigma\in X$ is defined as $|\sigma|-1$.
A $d$-dimensional simplex is also called a $d$-simplex or a $d$-face for short. The dimension $\dim(X)$ is defined as $\max\dim(\sigma)$ over all faces $\sigma\in X$. A $d$-dimensional simplicial complex is also referred to as a $d$-complex. The set of $j$-faces in $X$ is denoted by $F_j(X)$. For $t<\dim(X)$, the $t$-{\em skeleton} of $X$ is the simplicial complex that consists of all faces of dimension $\le t$ in $X$, and $X$ is said to have a {\em full} $t$-dimensional skeleton if its $t$-skeleton contains all the $t$-faces from $V$.  In this paper we usually work with a $d$-complex that has a full $(d-1)$-skeleton. 

For a face $\sigma$, the permutations on $\sigma$'s vertices are split in two
{\em orientations}, according to the permutation's sign.
The {\em boundary operator} $\partial=\partial_d$ maps an oriented
$d$-simplex $\sigma = (v_0,\ldots,v_d)$ to the formal sum
$\sum_{i=0}^{d}(-1)^i(\sigma^i)$, where $\sigma^i=(v_0,\ldots,v_{i-1},v_{i+1},\ldots,v_d)$ is an oriented
$(d-1)$-simplex. We fix some commutative ring $R$ and linearly extend
the boundary operator to free $R$-sums of
simplices. We denote by $\partial_d(X)$ the $d$-dimensional boundary operator of a $d$-complex $X$. Over the reals, the upper $(d-1)$-dimensional {\em Laplacian} of $X$ is $L=\partial_d(X)\partial_d(X)^*$.

When $X$ is finite, we consider the ${|F_{d-1}(X)|}\times{|F_d(X)|}$ matrix form of $\partial_d$ by choosing arbitrary orientations for
$(d-1)$-simplices and $d$-simplices. Note that changing the orientation of a $d$-simplex (resp. $d-1 $-simplex) results in multiplying the corresponding column (resp. row) by $-1$. 

Let $X$ be a $d$-dimensional simplicial complex. An element in the right kernel of the boundary operator $\partial_d(X)$ is called a {\em $d$-cycle}. Since $X$ is $d$-dimensional, the $d$-th homology group $H_d(X;R)$ (or vector space when $R$ is a field) of a $d$-complex $X$ equals to the space $\ker(\partial_d(X))$ of $d$-cycles. If $H_d(X;R)$ is trivial we say that $X$ is {\em $d$-acyclic}.

An element in the (right) image of $\partial_d$ is called a {\em $d$-boundary}. The $(d-1)$-st homology group $H_{d-1}(X;R)$ is the quotient group $\ker(\partial_{d-1}) / \im(\partial_d)$. Namely, the quotient of the $(d-1)$-cycles (=kernel of $\partial_{d-1}(X)$) and the $d$-boundaries.

Let us restrict the discussion to an $n$-vertex $d$-complex $X$ with full $(d-1)$-skeleton and the ring of rationals. Denote by $|X|=|F_d(X)|$ the number of top dimensional faces, $\beta_d(X)$ and $\beta_{d-1}(X)$ the dimensions of the corresponding homology groups (=Betti numbers), and $\rank(X)$ the rank of the operator $\partial_d=\partial_d(X)$.  Note that the Betti numbers and the rank are equal when we work over the rationals or the reals. By the rank-nullity theorem, $\beta_d(X) + \rank(X) = |X|$. In addition, $\beta_{d-1}(X)+\rank(X)=\binom{n-1}{d}$, since the space of $(d-1)$-cycles is $\binom {n-1}d$-dimensional (it is spanned by all the $(d-1)$-boundaries containing a specific vertex). These observations imply the following fact.
\begin{fact} 
Consider the three properties:
\begin{enumerate}
\item $|X|=\binom {n-1}d$.
\item $\beta_d(X)=0$.
\item $\beta_{d-1}(X)=0$.
\end{enumerate}
Then, any two properties imply the third. In such case, $X$ is a $d$-hypertree.
\end{fact}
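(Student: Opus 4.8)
The plan is to derive everything from the two rank identities recorded just above the statement, namely $\beta_d(X)+\rank(X)=|X|$ and $\beta_{d-1}(X)+\rank(X)=\binom{n-1}{d}$. Subtracting the first from the second eliminates $\rank(X)$ and produces the single clean relation
\[
\beta_{d-1}(X)-\beta_d(X)=\binom{n-1}{d}-|X|,
\]
valid for every $n$-vertex $d$-complex $X$ with full $(d-1)$-skeleton. In words: the homological defect $\beta_{d-1}-\beta_d$ always equals the combinatorial face defect $\binom{n-1}{d}-|X|$.

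From here I would simply run the three cases. If (1) and (2) hold, the right-hand side vanishes and $\beta_d(X)=0$, whence $\beta_{d-1}(X)=0$, which is (3). If (1) and (3) hold, the right-hand side vanishes and $\beta_{d-1}(X)=0$, whence $\beta_d(X)=0$, which is (2). If (2) and (3) hold, the left-hand side vanishes, forcing $|X|=\binom{n-1}{d}$, which is (1). So in each case all three properties hold simultaneously.

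For the last sentence of the statement, I would appeal directly to the definition of a $d$-hypertree. Working over $\Q$, the Betti numbers $\beta_{d-1}(X)$ and $\beta_d(X)$ are exactly the dimensions of $H_{d-1}(X;\Q)$ and $H_d(X;\Q)$, so once all three properties hold we have $H_{d-1}(X;\Q)=H_d(X;\Q)=0$; together with the standing assumption that $X$ has a full $(d-1)$-skeleton, this is precisely the definition.

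I do not expect any real obstacle: the only substantive input is the identity $\beta_{d-1}(X)+\rank(X)=\binom{n-1}{d}$, i.e.\ that the space of rational $(d-1)$-cycles has dimension $\binom{n-1}{d}$, and this was already justified in the preceding paragraph (it is spanned by the $(d-1)$-boundaries through a fixed vertex). Everything else is bookkeeping with the rank-nullity theorem, so the proof is a short computation rather than a genuinely hard argument.
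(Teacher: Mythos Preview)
Your proposal is correct and follows exactly the route the paper indicates: the paper does not write out a separate proof but simply records the two identities $\beta_d(X)+\rank(X)=|X|$ and $\beta_{d-1}(X)+\rank(X)=\binom{n-1}{d}$ and declares that ``these observations imply the following fact.'' Your subtraction to obtain $\beta_{d-1}(X)-\beta_d(X)=\binom{n-1}{d}-|X|$ and the three-case check is precisely the intended verification.
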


Finally, the {\em homological shadow} $\SH(X)$, of a $d$-complex $X$, is the set of $d$-simplices $\sigma$ such that $\partial\sigma$ is a $d$-boundary of $X$. In other words, the complement $\overline\SH(X)$ is the set of $d$-simplices $\sigma$ for which $rank(X) < rank(X\cup\{\sigma\})$. The set $\overline\SH(X)$ plays a natural role in the construction of $d$-hypertrees, since it is comprised of those $d$-faces that can be added to the $d$-acyclic complex $X$ without creating a $d$-cycle.

\subsection{Linial-Meshulam complexes}
\begin{figure}[!b]
    \centering
    \subfloat[~The density of $\overline\SH(Y_2\left(n,\frac cn\right))$.]{{\includegraphics[width=0.45\textwidth]{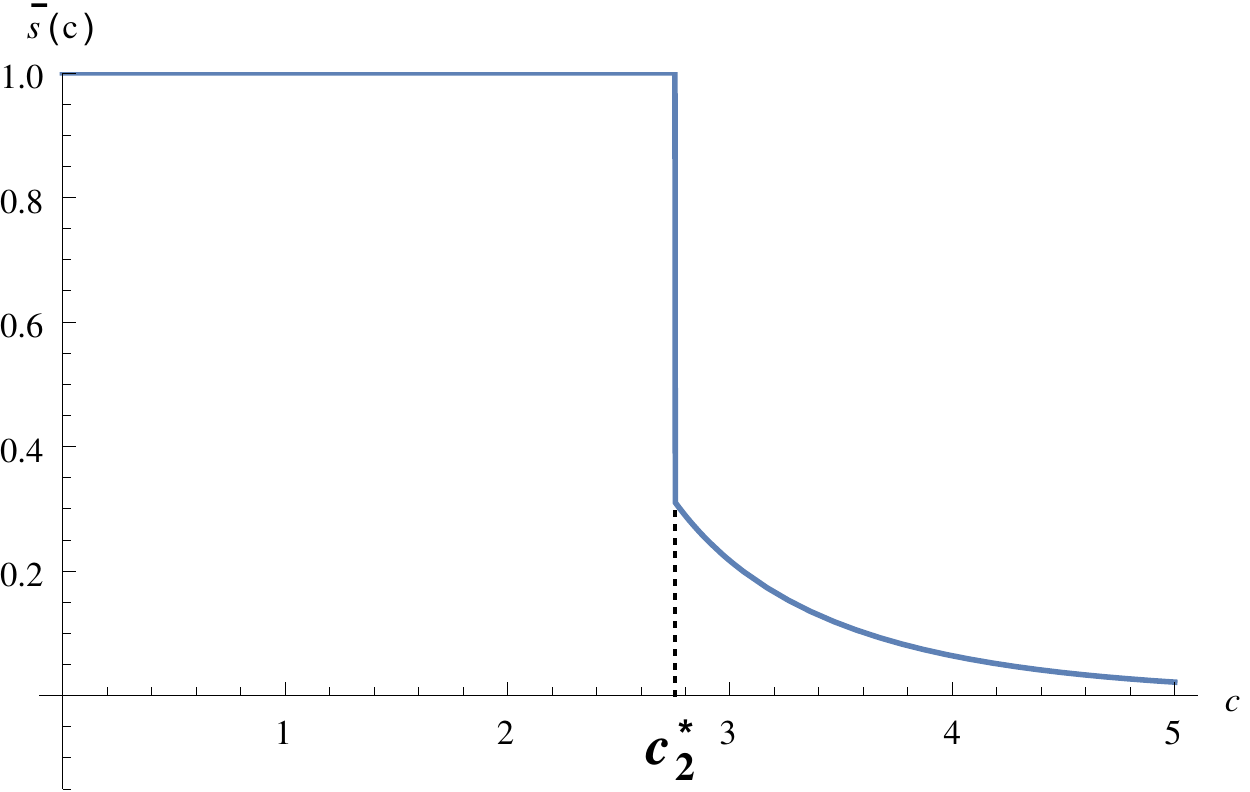} }\label{fig:shadowCompelx}}%
    \qquad
    \subfloat[~The normalized rank of $Y_2\left(n,\frac cn\right)$.]{{\includegraphics[width=0.45\textwidth]{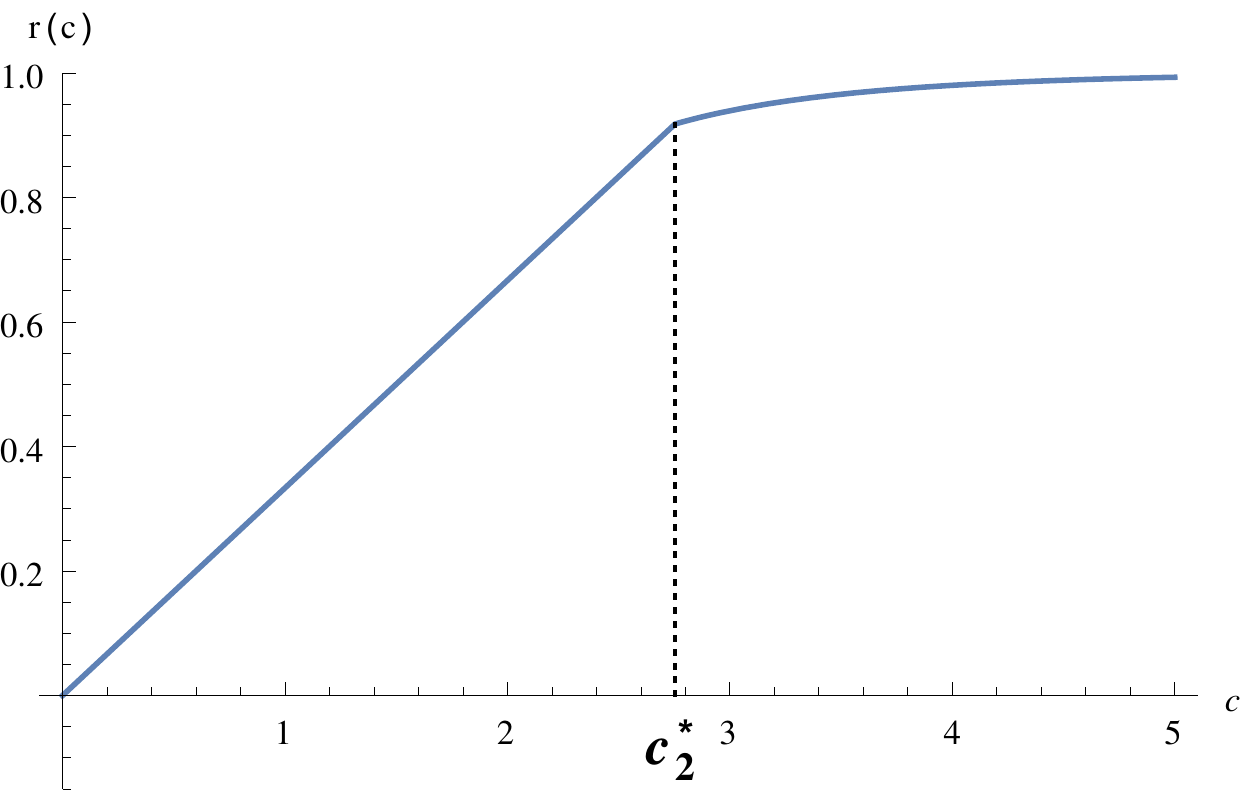} }\label{fig:shadowGraph}}%
    \caption{Plot of the functions $\bar s$ and $r$ for dimension $d=2$. }%
    \label{fig:shadowComparison}%
\end{figure}

The Linial-Meshulam complex $Y_d(n,p)$ is a random $n$-vertex $d$-dimensional simplicial complex with a full $(d-1)$-skeleton where every $d$-face appears independently with probability $p$. The topological invariants and combinatorial properties of these complexes have been intensively studied in recent years. This includes their homology groups, homotopy groups, collapsibility, embeddability and spectral properties. Here we use our previous paper~\cite{LP} that concerns the phase transition of this random simplicial complex, the threshold probability for $d$-acyclicity, and the emergence of a giant shadow. We need to briefly recall the pertinent results. Let $t=t_d^*$ be the unique root in $(0,1)$ of
\[
(d+1)(1-t)+(1+d\cdot t)\ln t=0,
\]
and let
\[c_d^* := \frac{-\ln t_d^*}{(1-t_d^*)^d}.\]

For $c>c_d^*$, let $t_c$ be the smallest positive root of $t=e^{-c(1-t)^d}$. Consider the functions $s,r:\R_{\ge 0}\to [0,1]$,
\[
\bar s(c) = 
\left\{ 
\begin{matrix}
1&c\le c_d^* \\
1-(1-t_c)^{d+1} & c>c_d^*
\end{matrix}
\right.~~~,~~~
r(c) = 
\left\{ 
\begin{matrix}
\frac{c}{d+1} &c\le c_d^* \\
\frac{c}{d+1}(1-(1-t_c)^{d+1})+(1-t_c)-ct_c(1-t_c)^d & c>c_d^*
\end{matrix}
\right.
\]
The function $r$ is strictly monotone, and we denote its inverse by $r^{-1}$. 
\begin{theorem}
Let $Y=\LMc$. Then, 
$$\mbox{(I)~} \lim_{n\to\infty}\frac{1}{\binom{n-1}d}\E[\rank(Y)]=r(c).$$
$$\mbox{(II) If $c\ne c_d^*$,~} \lim_{n\to\infty}\frac{1}{\binom{n}{d+1}}\E[|\overline\SH(Y)|]=\bar s(c).$$
In addition, for every $\varepsilon>0$, the probability that either the normalized rank or the density of the shadow's complement deviate from their expectation by more than $\varepsilon$ tends to $0$ as $n\to\infty$.
\end{theorem}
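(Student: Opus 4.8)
The plan is to analyze both functionals through the local weak (Benjamini--Schramm) limit of the bipartite incidence graph $B_n$ between $(d-1)$-faces and $d$-faces of $\LMc$, using the local-weak-convergence approach. First I would identify this limit. Around a typical $(d-1)$-face $\tau$, the $d$-faces containing it are $\tau\cup\{w\}$ for $w\notin\tau$, each present independently with probability $c/n$, so $\tau$ has $\mathrm{Poisson}(c)$ incident $d$-faces in the limit; each such $d$-face $\sigma$ has exactly $d$ further $(d-1)$-subfaces, and in the sparse regime these are asymptotically fresh faces that recurse independently. Thus $B_n$ converges locally to the unimodular two-type Galton--Watson tree $\mathbb T=\mathbb T_{c,d}$ in which $(d-1)$-face nodes have $\mathrm{Poisson}(c)$ children and $d$-face nodes have exactly $d$ children. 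Proving this convergence is routine first/second-moment bookkeeping on the expected number of rooted neighbourhoods of each isomorphism type.

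The combinatorial heart is the collapse (leaf-removal) process on $\mathbb T$. Let $t=t_c$ be the probability that the root $(d-1)$-face is \emph{not} eliminated by collapsing within its own subtree. A $d$-face can be collapsed into the root only when all of its other $d$ subfaces have already been removed by their subtrees, an event of probability $(1-t)^d$; thinning the $\mathrm{Poisson}(c)$ children accordingly, the root survives iff it has no such ``ready'' child, giving the fixed-point equation $t=e^{-c(1-t)^d}$. Taking $t_c$ to be its smallest positive root yields $t_c=1$ for small $c$ and $t_c<1$ past a threshold. I would then locate the threshold $c_d^*$ by demanding continuity of the rank (the homology $\beta_d$ must vanish up to the transition while the shadow may jump): setting the supercritical expression for $r$ equal to $c/(d+1)$ and substituting $c=-\ln t/(1-t)^d$ collapses, after dividing by $(1-t)$ and clearing denominators, exactly to $(d+1)(1-t)+(1+dt)\ln t=0$, which identifies $t_d^*$ and hence $c_d^*$.

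With the order parameter $t_c$ in hand I would compute the two functionals. For the shadow, a $d$-face $\sigma$ lies in $\overline\SH$ iff $\partial\sigma\notin\im\partial_d$, and in the tree limit this happens iff at least one of the $d+1$ subfaces of $\sigma$ survives the collapse; since the subtrees hanging from the $d+1$ subfaces are asymptotically disjoint and independent, each survives with probability $t_c$, so the first-moment computation gives $\E|\overline\SH|/\binom{n}{d+1}\to 1-(1-t_c)^{d+1}=\bar s(c)$. For the rank I would use the objective method: a maximum collapse is a greedy (Karp--Sipser) matching of rows to columns whose normalized size is read off from the fixed point, giving $\rank(Y)\ge(\text{collapsed }d\text{-faces})$; the matching upper bound for the rank of the signed operator is then shown to be tight up to the homology supported on the surviving core, so that $\rank(Y)/\binom{n-1}{d}\to r(c)$. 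Equivalently, and as a useful cross-check, the identity $r'(c)=\bar s(c)/(d+1)$---each freshly exposed $d$-face raises the rank precisely when it lies in $\overline\SH$---together with the boundary value $r(c_d^*)=c_d^*/(d+1)$ integrates to the stated three-term closed form.

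Finally, concentration. The rank is a $1$-Lipschitz function of the independent indicators of the $\binom{n}{d+1}$ potential $d$-faces, so McDiarmid's inequality gives deviations of order $n^{(1-d)/2}=o(1)$ after normalization by $\binom{n-1}{d}$, for every $d\ge2$. The shadow is more delicate: toggling a single $d$-face can change $|\overline\SH|$ by $\Theta(n^d)$ (as when adding an edge merges two giant pieces in the graph case $d=1$), so a naive bounded-difference bound fails; instead I would deduce concentration of the shadow density from concentration of the global collapse order parameter $t_c$, obtained from the local convergence together with a sprinkling/second-moment argument. The main obstacle I anticipate is the tight rank computation---the $\pm1$ signs of $\partial_d$ create genuine cancellation, so the rank is strictly below the matroidal (matching) rank and is not a continuous local functional; pinning down that the excess kernel is exactly the homology carried by the surviving core, and that it contributes precisely the gap between the matching number and $r(c)$, is where the real work lies and where the Bordenave--Lelarge--Salez spectral-measure machinery must be invoked and adapted to boundary operators.
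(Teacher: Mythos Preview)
This theorem is not proved in the present paper: it is stated in Section~2.2 as a background result and cited from the authors' earlier work~\cite{LP}. So there is no ``paper's own proof'' to compare against here; the paper only recalls the statement and the functions $r(c)$ and $\bar s(c)$, and later (Section~2.3) summarizes the key tools from~\cite{LP}---local weak convergence to a $d$-tree and the spectral-measure Lemmas~\ref{lem:aux_lim} and~\ref{lem:aux_ind}---which are then reused for the $1$-out model.

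Your sketch is, in broad strokes, the approach of~\cite{LP}: identify the local weak limit as the Poisson/$d$-regular $d$-tree, analyze the collapse fixed point $t=e^{-c(1-t)^d}$, and upgrade to the rank via the spectral measure at $0$ (what you call the Bordenave--Lelarge--Salez machinery and what the paper packages as Lemmas~\ref{lem:aux_lim}--\ref{lem:aux_ind}). Two caveats on your sketch. First, your shadow argument ``$\sigma\in\overline\SH$ iff some subface survives the collapse'' conflates a linear-algebraic condition with a combinatorial one; in~\cite{LP} the shadow density is obtained by differentiating the rank ($r'(c)=\bar s(c)/(d+1)$, which you also mention as a cross-check) rather than by a direct collapse characterization, and this is the honest route since $\overline\SH$ is not a local observable. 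Second, your concentration paragraph is right that McDiarmid handles the rank but not the shadow; in~\cite{LP} the shadow concentration is indeed derived indirectly, essentially from rank concentration at nearby values of $c$ together with the derivative identity, rather than from a standalone second-moment on the collapse order parameter.
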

Note that the functions $\bar s$ and $r$ appear, with small variations in \cite{LP}. Namely, $1-\bar s(c)$ is the density of the shadow of $\LMc$, and $\frac{c}{d+1}-r(c)$ is its normalized $d$-dimensional Betti number, since the number of $d$-faces is $\approx c/(d+1)\binom{n-1}{d}$.

\subsection{Local weak convergence and $d$-trees}
We turn to describe the notion of local weak convergence of $d$-complexes. This concept is best described in the framework of rooted graphs. A rooted graph $(G,r)$ is comprised of a graph $G$ and a root vertex $r$. Two rooted graphs are considered isomorphic if there is a root-preserving isomorphism between them. 

Associated with a $d$-dimensional complex $X$, is the bipartite inclusion graph $G(X)$ between $X$'s set of $(d-1)$-faces $F_{d-1}(X)$ and its $d$-faces $F_d(X)$. A rooted $d$-complex $(X,o)$  is comprised of a $d$-complex $X$ and a $(d-1)$-face $o$ that is marked as its root. The graph $G(X)$ of a rooted $d$-complex $(X,o)$ is a rooted graph. For every integer $k\ge 0$, we denote by $(G(X),o)_k$ the rooted subgraph of $G(X)$ that is induced by vertices of {\em distance} at most $k$ from $o$ in $G(X)$.

Let $X_n$ be a sequence of random $d$-complexes and $(X,o)$ be a random rooted $d$-complex. Formally speaking, $X_n$ is a sequence of distributions on $d$-complexes and $(X,o)$ is a distribution on rooted $d$-complexes. We say that $(X,o)$ is the {\em local weak limit} of $X_n$ if for every integer $k>0$ and every rooted graph $(G,r)$,
\[
\Pr_{X_n,o_n}\left[ (G(X_n),o_n)_k\cong(G,r) \right]
\xrightarrow{n\to\infty}
\Pr_{(X,o)}\left[ (G(X),o)_k\cong(G,r) \right],
\]
where the root $o_n$ is sampled uniformly at random from $F_{d-1}(X_n)$.

We next define the concept of a {\em $d$-tree}. Do bear in mind that this is not to be confused with the notion of a $d$-hypertree. A $d$-tree $(B,o)$ is a rooted $d$-complex that can be viewed as a (possibly infinite) $d$-dimensional branching process. Initially the complex consists of the $(d-1)$-face $o$. At every step $k\ge 0$, every $(d-1)$-face $\tau$ of distance $2k$ from $o$ picks a non-negative number $m=m_\tau$ of new vertices $v_1,\ldots,v_m$, and adds the $d$-faces $v_1\tau,\ldots,v_m\tau$ to $B$. 

We observe that the graph $G(B)$ is a {\em rooted tree}, with $o$ as the root. Every vertex of odd depth (=distance from $o$) in $G(B)$ corresponds to a $d$-face and has exactly $d$ children. Every vertex $\tau$ of even depth has $m_\tau$ children. In fact, every rooted tree in which every vertex of odd depth has precisely $d$ children can be realized as an inclusion graph of a $d$-tree. Therefore, for every $(d-1)$-face $\tau\in B$ we refer to the {\em $d$-subtree rooted at $\tau$} as the rooted complex that contains $\tau$ and all its descendant faces. In particular, if the root $o$ is contained in $m$ $d$-faces $\sigma_1,\ldots,\sigma_m$, we denote by $B_{j,i}$ the $d$-subtree rooted at the $i$-th $(d-1)$-face of $\sigma_j$, for every $1\le j\le m$ and $1\le i \le d$.

The concept of local weak convergence is useful for us, since under some assumptions, the Betti numbers of a convergent sequence of finite complexes $X_n$ can be bounded by a parameter of its local weak limit. In addition, when the local weak limit is a $d$-tree, this parameter is expressible by some inductive formula. This approach is described in \cite{LP}, and we briefly mention the pertinent parts of that work.

Let $X$ be a (possibly infinite) $d$-complex. We are interested in the spectral measure of its upper $(d-1)$-dimensional Laplacian. This Laplacian $L$ is a symmetric operator acting on the Hilbert space $\mathcal H=\ell^2(F_{d-1}(B))$.
For finite $X$, we can apply the spectral theorem to $L$ which is symmetric, and therefore self-adjoint. However, for infinite $X$ the situation is more subtle. The Laplacian $L$ is only densely-defined on the subspace of functions with finite support. It has a unique extension $\hat L$ to $\mathcal H$ which may be (but is not necessarily) self-adjoint. We say that a complex is self-adjoint if the extension $\hat L$ is a self-adjoint operator. For example, if $B$ is a random rooted $d$-tree such that the expected degree of its $(d-1)$-faces is bounded, then it is almost surely self-adjoint. When $X$ is self-adjoint, we can apply the {\em spectral theorem} and obtain the {\em spectral measure} $\mu_{X,\tau}$ of $\hat L$ with respect to the characteristic vector of a $(d-1)$-face $\tau$. 

A key parameter that we study is $x_B:=\mu_{B,o}(\{0\})$. In words, $\mu_{B,o}$ is the spectral measure of the (extended) Laplacian of a $d$-tree $B$ with respect to its root $o$. The reason that we are interested in the measure of the atom $\{0\}$ is that for a finite complex $X$, the sum $\sum_{\tau\in F_{d-1}(X)}\mu_{X,\tau}(\{0\})$ is equal to the dimension of the kernel of $X$'s Laplacian, which is very close to $\beta_{d-1}(X)$. Here are two key lemmas from \cite{LP} that we use. 


\begin{lemma}\label{lem:aux_lim}
Let $X_n$ be a random $n$-vertex $d$-complex with a full $(d-1)$-skeleton, and $(B,o)$ be a random rooted $d$-tree that is almost surely self-adjoint. If $X_n$ locally weakly converges to $(B,o)$ then,
\[
\limsup_{n\to\infty}\frac{1}{\binom nd}\E_{X_n}[\beta_{d-1}(X_n)]\le \E_{B}[x_B]
\]
\end{lemma}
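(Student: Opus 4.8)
The plan is to route everything through the spectral measure of the upper $(d-1)$-Laplacian $L=\partial_d(X_n)\partial_d(X_n)^*$ and to exploit that the atom of a measure at the single point $0$ is only an \emph{upper} semicontinuous functional --- which is exactly why only a one-sided bound comes out. Over $\Q$ one has $\dim\ker L(X_n)=\binom nd-\rank(\partial_d(X_n))$ while $\beta_{d-1}(X_n)=\binom{n-1}d-\rank(\partial_d(X_n))$ (the $(d-1)$-cycle space being $\binom{n-1}d$-dimensional), so $\dim\ker L(X_n)=\beta_{d-1}(X_n)+\binom{n-1}{d-1}$ identically and, since $\binom{n-1}{d-1}=o\!\left(\binom nd\right)$, it suffices to bound $\limsup_n\frac1{\binom nd}\E[\dim\ker L(X_n)]$. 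Writing $P_0$ for the orthogonal projection onto $\ker L(X_n)$ we have $\dim\ker L(X_n)=\operatorname{tr}P_0=\sum_{\tau\in F_{d-1}(X_n)}\mu_{X_n,\tau}(\{0\})$, so introducing a uniformly random root $o_n\in F_{d-1}(X_n)$ together with the averaged spectral measure $\nu_n:=\E_{o_n}\E_{X_n}[\mu_{X_n,o_n}]$ (a probability measure on $[0,\infty)$) gives $\frac1{\binom nd}\E[\dim\ker L(X_n)]=\nu_n(\{0\})$. Setting $\nu_\infty:=\E_B[\mu_{B,o}]$ we have $\nu_\infty(\{0\})=\E_B[x_B]$ --- and here self-adjointness of $B$ is precisely what makes $\mu_{B,o}$, hence $\nu_\infty$, a bona fide probability measure whose moments are $\E_B\langle\delta_o,\hat L^{\,k}\delta_o\rangle$ --- so the goal becomes $\limsup_n\nu_n(\{0\})\le\nu_\infty(\{0\})$.

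Next I would establish $\int x^k\,d\nu_n\to\int x^k\,d\nu_\infty$ for every $k$. The $k$-th moment $\int x^k\,d\nu_n=\E_{o_n}\E_{X_n}\langle\delta_{o_n},L^k\delta_{o_n}\rangle$ is a \emph{local} quantity: expanding $L^k$ as a sum over walks of length $k$ based at $o_n$ in the inclusion graph $G(X_n)$, each contribution depends only on the isomorphism type of the radius-$k$ ball around $o_n$ --- the orientation signs attached to a walk may be normalized away, since the diagonal entries of $L^k$ are invariant under re-orienting simplices. Hence local weak convergence of $X_n$ to $(B,o)$ forces the moment convergence, once one checks uniform integrability of this local functional; in our setting that follows from a degree-truncation argument --- only $o(n^d)$ of the $(d-1)$-faces lie in the high-degree part and they contribute negligibly to $\nu_n$. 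In particular the second moments of $\nu_n$ stay bounded, so $\{\nu_n\}$ is tight.

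Finally, choose a subsequence realizing $\limsup_n\nu_n(\{0\})$ and, by tightness, pass to a further subsequence with $\nu_n\Rightarrow\mu^*$. The Portmanteau theorem (with $\{0\}$ closed) gives $\limsup_n\nu_n(\{0\})\le\mu^*(\{0\})$, while moment convergence together with uniform integrability of $\{x^k\}$ shows that $\mu^*$ and $\nu_\infty$ have the same moments; under the growth control available here (Carleman's condition, or equivalently a uniform bound $\|L(X_n)\|\le\Lambda$ after truncation) the Stieltjes moment problem is determinate, so $\mu^*=\nu_\infty$ and $\limsup_n\nu_n(\{0\})\le\nu_\infty(\{0\})=\E_B[x_B]$. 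Equivalently one may use the squeeze $\nu_n(\{0\})\le\int (1-x/\lambda)_+\,d\nu_n\to\int (1-x/\lambda)_+\,d\nu_\infty\downarrow\nu_\infty(\{0\})$ as $\lambda\downarrow0$, which makes transparent why the reverse inequality can genuinely fail. The main obstacle is exactly this last step: because $\nu\mapsto\nu(\{0\})$ is only upper semicontinuous one must simultaneously prevent spectral mass of $\nu_n$ from escaping to $+\infty$ (the tightness/truncation input, where the full-skeleton structure and model-specific degree bounds are used) and ensure the limiting moments pin down $\nu_\infty$ (which is what self-adjointness of the limiting $d$-tree buys us); this is also the conceptual reason the conclusion is an inequality --- long-range cancellations in $\partial_d(X_n)$ that raise $\rank$ and shrink $\dim\ker L$ are invisible to the local limit, so the tree can only over-estimate the kernel.
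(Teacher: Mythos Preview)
The paper does not actually give a proof of this lemma; it is quoted verbatim from the authors' earlier paper \cite{LP} (see the sentence ``Here are two key lemmas from \cite{LP} that we use'' immediately preceding the statement). So there is no in-paper argument to compare against.

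That said, your outline is essentially the argument that appears in \cite{LP}, which in turn adapts the Bordenave--Lelarge--Salez method for graphs: rewrite $\beta_{d-1}$ in terms of $\dim\ker L$, interpret the normalized kernel dimension as the atom at $0$ of the averaged spectral measure $\nu_n$, pass to the limit using local weak convergence, and conclude via upper semicontinuity of $\nu\mapsto\nu(\{0\})$. Your identification of the relation $\dim\ker L = \beta_{d-1}+\binom{n-1}{d-1}$ and of the one-sidedness coming from Portmanteau on the closed set $\{0\}$ is exactly right.

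One technical point deserves sharpening. The lemma as stated carries no explicit degree hypothesis, so your moment route (tightness from the second moment, Carleman for determinacy) is not directly available in full generality. The clean fix---which you gesture at---is to truncate first: delete all $d$-faces meeting a $(d-1)$-face of degree $>K$. This can only \emph{increase} $\beta_{d-1}$ (removing columns of $\partial_d$ cannot raise its rank), so the desired upper bound is preserved; the truncated Laplacian has operator norm $\le (d+1)K$, hence compactly supported spectral measures, and now local weak convergence gives genuine weak convergence $\nu_n^{(K)}\Rightarrow\nu_\infty^{(K)}$ with $\limsup_n\nu_n^{(K)}(\{0\})\le\nu_\infty^{(K)}(\{0\})$. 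Letting $K\to\infty$ and using the almost-sure self-adjointness of $(B,o)$ to control $\nu_\infty^{(K)}(\{0\})\to\E_B[x_B]$ finishes the job. Your sentence ``only $o(n^d)$ of the $(d-1)$-faces lie in the high-degree part'' is the right intuition but is a \emph{consequence} of local weak convergence to a limit with a.s.\ finite root degree, not an extra assumption; making that explicit would close the sketch.
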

We can, in fact, say more. Namely, the expected spectral measure of $X_n$'s Laplacian with respect to a uniformly random root weakly converges to the expected spectral measure of $B$'s Laplacian. The following lemma enables us to compute the seemingly complicated parameter $x_B$.
\begin{lemma}\label{lem:aux_ind}
Let $B$ be a self adjoint $d$-tree. Suppose that the root $o$ is contained in $m$ $d$-faces and consider the rooted $d$-subtrees $B_{j,i},~~1\le j\le m$,~ $1\le i \le d$, as defined above. If there exists a $1\le j \le m$ such that 
$
x_{B_{j,1}}=\cdots=x_{B_{j,d}}=0, 
$
then $x_B=0$. Otherwise,
\[
\frac{1}{x_B} = 1 + \sum_{j=1}^{m}\frac 1{\sum_{i=1}^d x_{B_{j,i}}}.
\]
\end{lemma}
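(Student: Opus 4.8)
The plan is to extract $x_B=\mu_{B,o}(\{0\})$ from the resolvent (Stieltjes transform) of the self-adjoint Laplacian $\hat L=\partial_d\partial_d^*$, evaluated on the negative real axis. Writing $G_o(z)=\langle e_o,(\hat L-z)^{-1}e_o\rangle=\int(\lambda-z)^{-1}\,d\mu_{B,o}(\lambda)$ for the Green's function at the root $o$, where $e_\tau$ denotes the characteristic vector of a $(d-1)$-face $\tau$, the atom at $0$ is recovered by
\[
x_B=\lim_{\eta\to 0^+}\eta\,G_o(-\eta)=\lim_{\eta\to 0^+}\int\frac{\eta}{\lambda+\eta}\,d\mu_{B,o}(\lambda),
\]
which holds by monotone convergence because $\hat L\succeq 0$ forces $\mu_{B,o}$ to be supported on $[0,\infty)$. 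Staying at $z=-\eta<0$, strictly inside the resolvent set, keeps every operator below boundedly invertible and all scalars real and positive; this is precisely what makes the two cases of the lemma transparent.

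The first key step is the rank-one decomposition $\hat L=\sum_{\sigma\in F_d(B)}b_\sigma b_\sigma^*$, where $b_\sigma=\partial_d e_\sigma=\sum_{\tau\subset\sigma}[\sigma{:}\tau]\,e_\tau$ is the column of $\partial_d$ indexed by $\sigma$ and $[\sigma{:}\tau]\in\{\pm1\}$ are the incidence signs. Applying the Schur-complement (Feshbach) formula to the splitting $\ell^2(F_{d-1}(B))=\mathrm{span}(e_o)\oplus e_o^\perp$, and using $\langle b_\sigma,e_o\rangle=[\sigma{:}o]$, yields
\[
G_o(z)=\frac{1}{(m-z)-\big\langle w,\,(L^{(o)}-z)^{-1}w\big\rangle},\qquad w=\sum_{j=1}^{m}[\sigma_j{:}o]\,u_j,\quad u_j=\sum_{i=1}^{d}[\sigma_j{:}\tau_{j,i}]\,e_{\tau_{j,i}},
\]
where $L^{(o)}$ is the minor of $\hat L$ obtained by deleting the $o$-coordinate and $\tau_{j,i}$ is the root of $B_{j,i}$. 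Because $B$ is a $d$-tree, deleting $o$ severs the $m$ subcomplexes hanging off the $\sigma_j$'s from one another, so $L^{(o)}=\bigoplus_{j=1}^{m}L^{(o),j}$ is block diagonal and, since $[\sigma_j{:}o]^2=1$, the quadratic form collapses to $\sum_{j}\langle u_j,(L^{(o),j}-z)^{-1}u_j\rangle$.

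The second key step identifies each block. The face $\sigma_j$ contributes the rank-one term $u_ju_j^*$ (the projection of $b_{\sigma_j}b_{\sigma_j}^*$ off the $o$-coordinate), and deleting it leaves the disjoint union of the subtree Laplacians $L_{j,i}$ of $B_{j,i}$; hence $L^{(o),j}=\bigoplus_{i=1}^{d}L_{j,i}+u_ju_j^*$. The Sherman--Morrison identity then gives
\[
\big\langle u_j,(L^{(o),j}-z)^{-1}u_j\big\rangle=\frac{S_j(z)}{1+S_j(z)},\qquad S_j(z)=\sum_{i=1}^{d}g_{j,i}(z),
\]
where $g_{j,i}(z)=\langle e_{\tau_{j,i}},(L_{j,i}-z)^{-1}e_{\tau_{j,i}}\rangle$ is the Green's function of the subtree $B_{j,i}$ at its own root, whose atom is $\lim_{\eta\to0^+}\eta\,g_{j,i}(-\eta)=x_{B_{j,i}}$.

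Finally I would set $z=-\eta$ and let $\eta\to 0^+$, using $\eta S_j(-\eta)\to X_j:=\sum_{i}x_{B_{j,i}}$. In the ``otherwise'' case every $X_j>0$, so $S_j(-\eta)\sim X_j/\eta$ and $\tfrac{S_j}{1+S_j}=1-\tfrac{\eta}{X_j}+o(\eta)$; the denominator of $G_o$ becomes $\eta\big(1+\sum_jX_j^{-1}\big)+o(\eta)$, whence $x_B=\lim\eta\,G_o(-\eta)=\big(1+\sum_jX_j^{-1}\big)^{-1}$, which is exactly the claimed recursion. If instead some $X_{j_0}=0$, then $\eta\big(1+S_{j_0}(-\eta)\big)=\eta+\eta S_{j_0}(-\eta)\to 0$, and from $D(-\eta):=\eta+\sum_j(1+S_j)^{-1}\ge \eta+(1+S_{j_0})^{-1}$ we get $\eta/D(-\eta)\le \eta(1+S_{j_0})/\big(\eta(1+S_{j_0})+1\big)\to 0$, so $x_B=0$. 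I expect the main obstacle to be functional-analytic rather than algebraic: justifying the rank-one decomposition, the Schur complement, and Sherman--Morrison for the possibly unbounded operator $\hat L$, and confirming that the minors $L^{(o),j}$ and subtree Laplacians $L_{j,i}$ are themselves self-adjoint with the correct extension. I would handle this by carrying out every manipulation at a fixed $z=-\eta$ in the resolvent set, first on finitely supported vectors and then extending by density, and only passing to $\eta\to0^+$ at the very end; the almost-sure self-adjointness of bounded-expected-degree $d$-trees, applied to each subtree, supplies the regularity needed.
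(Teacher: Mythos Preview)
The paper does not actually prove Lemma~\ref{lem:aux_ind}; it is quoted verbatim from the authors' earlier work~\cite{LP} (``Here are two key lemmas from~\cite{LP} that we use''), so there is no in-paper proof to compare against. That said, your resolvent/Schur-complement argument is correct and is essentially the method by which such tree recursions are established in~\cite{LP} and in the closely related literature on spectral atoms of sparse random operators (Bordenave--Lelarge et al.). The identification $x_B=\lim_{\eta\to0^+}\eta\,G_o(-\eta)$, the rank-one decomposition of $\hat L$, the block-diagonalization of the minor $L^{(o)}$ coming from the $d$-tree structure, and the Sherman--Morrison reduction to $S_j/(1+S_j)$ are all standard and correctly carried out. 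Your treatment of the degenerate case $X_{j_0}=0$ via the inequality $\eta/D(-\eta)\le \eta(1+S_{j_0})/\bigl(\eta(1+S_{j_0})+1\bigr)$ is clean and avoids the trap of assuming $S_{j_0}(-\eta)$ stays bounded.

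The only point that deserves a bit more care than you indicate is the self-adjointness of the subtree Laplacians $L_{j,i}$. You invoke ``almost-sure self-adjointness of bounded-expected-degree $d$-trees, applied to each subtree,'' but the lemma is stated for a \emph{deterministic} self-adjoint $d$-tree $B$, with no probabilistic hypothesis available. What is actually needed is that essential self-adjointness of $\hat L$ on $B$ passes to the minors and thence to the subtree Laplacians; this is true (removing a single coordinate, and then a rank-one perturbation, preserves essential self-adjointness), but it is a deterministic operator-theoretic fact, not a consequence of any expected-degree bound. Apart from this minor mis-attribution of the justification, your proposal is sound.
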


\section{Enumeration of $d$-hypertrees}
\label{sec:bounds}
\subsection{Proof of Theorem \ref{thm:1}}
We start with the following extremal question: What is the largest possible shadow of an $n$-vertex $d$-acyclic complex $X$ with a given number of $d$-faces? Equivalently, what is the least possible number of ways that $X$ can be extended to a $d$-acyclic complex with $(|X|+1)$ $d$-faces? Although the following claim is not tight, it suffices for our purposes and its proof is fairly simple. It is possible to derive a tight bound using shifting methods~\cite{BK}.
\begin{claim}
\label{clm:worst_case}
If $X$ is an $n$-vertex $d$-acyclic complex, then $|\overline{\SH}(X)|\ge \binom n{d+1}\left(1 - \frac{|X|}{\binom{n-1}d} \right)$.
\end{claim}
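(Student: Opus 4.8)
The plan is to reduce the claim to a lower bound on $\rank(\partial_d)$ of a $d$-complex with full $(d-1)$-skeleton, and then prove that bound by averaging over the vertices. First I would record that since $X$ is $d$-acyclic, $\ker(\partial_d(X))=0$, so $\rank(\partial_d(X))=|X|$. Every $\sigma\in\SH(X)$ satisfies $\partial\sigma\in\im(\partial_d(X))$, so adjoining the $d$-faces of $\SH(X)$ to $X$ does not enlarge the column space; hence the $d$-complex $Y:=\SH(X)$ — which has a full $(d-1)$-skeleton because it contains $X$ — satisfies $\rank(\partial_d(Y))=|X|$. Since $|\overline{\SH}(X)|=\binom{n}{d+1}-|Y|$ and $\binom{n}{d+1}=\frac{n}{d+1}\binom{n-1}{d}$, the claim is equivalent to $|Y|\le\frac{n}{d+1}|X|=\frac{n}{d+1}\rank(\partial_d(Y))$. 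So it suffices to prove, for every $n$-vertex $d$-complex $Y$ with a full $(d-1)$-skeleton, that
$$\rank(\partial_d(Y))\ \ge\ \frac{d+1}{n}\,|Y|.$$

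The key step is a per-vertex bound: for every vertex $v$,
$$\rank(\partial_d(Y))\ \ge\ \bigl|\{\sigma\in F_d(Y):v\in\sigma\}\bigr|.$$
To prove it, list the $d$-faces of $Y$ containing $v$ as $v\tau_1,\dots,v\tau_k$, where $\tau_1,\dots,\tau_k$ are distinct $(d-1)$-faces avoiding $v$ (precisely the faces of $\link_Y(v)$). Orienting each $v\tau_i$ with $v$ as its first vertex, the boundary $\partial(v\tau_i)$ equals $\pm\tau_i$ plus a rational combination of $(d-1)$-faces that all contain $v$; therefore the $k\times k$ submatrix of $\partial_d(Y)$ on rows $\tau_1,\dots,\tau_k$ and columns $v\tau_1,\dots,v\tau_k$ is diagonal with $\pm1$ entries, so those $k$ columns are linearly independent and the bound follows. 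Summing this over all $n$ vertices and using $\sum_v\bigl|\{\sigma\in F_d(Y):v\in\sigma\}\bigr|=(d+1)|Y|$ gives $n\cdot\rank(\partial_d(Y))\ge(d+1)|Y|$, which is exactly the displayed inequality.

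I do not expect a serious obstacle: the linear-algebra heart of the argument — that the columns indexed by the $d$-faces through a fixed vertex are independent — is essentially a one-line observation about the triangular structure of $\partial_d$. The only points requiring care are the bookkeeping reductions, namely that $\SH(X)$ has a full $(d-1)$-skeleton and that $\rank(\partial_d(\SH(X)))=\rank(\partial_d(X))=|X|$, and that the averaging inequality must be applied to $Y=\SH(X)$ rather than to $X$ itself. It is worth noting that $\rank(\partial_d(Y))\ge\frac{d+1}{n}|Y|$ is tight when $Y$ is the complete $d$-complex on $[n]$ (both sides equal $\binom{n-1}{d}$), matching the fact that the claim is tight when $X$ is a basis of the complete complex; the slack in the claim for a general $d$-hypertree $X$ reflects that its shadow $\SH(X)$ is typically far from complete.
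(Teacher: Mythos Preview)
Your proof is correct and follows essentially the same route as the paper: set $Y=\SH(X)$, note $\rank(\partial_d(Y))=|X|$, and then bound $|Y|$ by double-counting vertex--face incidences, using that for each vertex $v$ the $d$-faces of $Y$ containing $v$ have independent boundary columns. The paper phrases that last step as ``the star at $v$ is acyclic'' rather than exhibiting the diagonal submatrix, but the content is identical; your only slip is in the closing commentary, where ``for a general $d$-hypertree $X$'' should read ``for a general $d$-acyclic $X$'' (a $d$-hypertree already has full shadow, so the bound is tight there with no slack).
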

\begin{proof}
Denote $Y=\SH(X)$ and observe that $\rank(Y)=|X|$ since $X$ is $d$-acyclic. There are clearly exactly $(d+1)|Y|$ pairs $(v,\sigma)$ where $v\in\sigma$ is a vertex and $\sigma\in Y$ a $d$-face. In addition, consider the collection of all $d$-faces of $Y$ that contain $v$. This is an acyclic subcomplex, since even the $d$-complex with {\em all} $\binom{n-1}{d}$ $d$-faces that contain $v$ is acyclic. Therefore, the number of such $d$-faces is at most $\rank(Y)=|X|$. It follows that $(d+1)|Y| \le n\cdot|X|$, as claimed.
\end{proof}

As we observe next, Claim \ref{clm:worst_case} yields a simple proof of Kalai's lower bound.
\begin{corollary} [\cite{kalai}]
\label{cor:kalai_simple}
$|\Tnd|\ge \left( \frac {n}{d+1}\right) ^ {\binom {n-1}d}.$
\end{corollary}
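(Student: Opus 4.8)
The plan is to build a $d$-hypertree greedily, one $d$-face at a time, starting from the empty complex (with the full $(d-1)$-skeleton) and iterating $\binom{n-1}{d}$ times. At each step the current complex $X$ is $d$-acyclic, and a face may be added to keep it $d$-acyclic precisely when it lies in $\overline{\SH}(X)$; so the number of hypertrees is at least the product, over the $\binom{n-1}{d}$ steps, of a lower bound on $|\overline{\SH}(X)|$, divided by $\binom{n-1}{d}!$ to account for the orderings. Concretely, if after adding $i$ faces we always have $|\overline{\SH}(X)| \ge f(i)$, then
\[
|\Tnd| \ge \frac{1}{\binom{n-1}{d}!}\prod_{i=0}^{\binom{n-1}{d}-1} f(i).
\]

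For the per-step bound I would invoke Claim \ref{clm:worst_case}: when $X$ has $i$ faces, $|\overline{\SH}(X)| \ge \binom{n}{d+1}\bigl(1 - i/\binom{n-1}{d}\bigr)$. Writing $N=\binom{n-1}{d}$ and using $\binom{n}{d+1} = \frac{n}{d+1}\binom{n-1}{d} = \frac{n}{d+1}N$, the product becomes
\[
\prod_{i=0}^{N-1}\frac{n}{d+1}N\cdot\frac{N-i}{N} = \left(\frac{n}{d+1}\right)^{N} N^{N}\cdot\frac{N!}{N^{N}} = \left(\frac{n}{d+1}\right)^{N} N!.
\]
Dividing by $N!$ exactly cancels the factorial, leaving $|\Tnd| \ge \bigl(\tfrac{n}{d+1}\bigr)^{\binom{n-1}{d}}$, which is the claimed bound. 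So the corollary follows by plugging Claim \ref{clm:worst_case} into this greedy-counting scheme and simplifying.

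There is essentially no obstacle here; the only point requiring a word of care is the justification that a $d$-acyclic complex $X$ with $|X| < \binom{n-1}{d}$ really can be extended — i.e.\ that $\overline{\SH}(X)$ is nonempty whenever $X$ is not yet a hypertree — which is immediate since $\rank(\partial_d(X)) = |X| < \binom{n-1}{d} = \rank(\partial_d)$ forces some $d$-face whose boundary is not in $\im(\partial_d(X))$, and adding any such face preserves $d$-acyclicity by the rank–nullity bookkeeping recorded in the Fact above. (One should also note the bound $f(i) = \binom{n}{d+1}(1 - i/N)$ is positive for all $i < N$, so every factor in the product is genuinely a positive lower bound on the number of available extensions.) The whole argument is thus a short deterministic counting argument; the content is entirely in Claim \ref{clm:worst_case}.
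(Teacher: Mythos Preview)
Your proof is correct and follows essentially the same approach as the paper: build a hypertree greedily, apply Claim~\ref{clm:worst_case} at each step to lower-bound the number of choices, and divide the resulting product by $\binom{n-1}{d}!$ to undo the ordering. The only difference is cosmetic --- you spell out the simplification of the product and add a remark on why the greedy process never gets stuck, which the paper leaves implicit.
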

\begin{proof}
Let us construct a $d$-hypertree starting with a full $(d-1)$-skeleton and adding one $d$-face at a time. By Claim \ref{clm:worst_case} there are at least $\binom n{d+1}\left(1 - \frac{i}{\binom{n-1}d} \right)$ possible choices for our $i$-th step. This argument counts every $d$-hypertree $\binom{n-1}d !$ times. Therefore,
\[
\binom{n-1}d !~\cdot~|\Tnd|  \ge \prod_{i=0}^{\binom{n-1}d -1}\binom n{d+1}\left(1 - \frac{i}{\binom{n-1}d} \right).
\]
The claim follows directly.
\end{proof}

We turn to prove Theorem \ref{thm:1} in a way that refines the previous argument. Consider the {\em random $d$-acyclic complex process} on $n$ vertices $\mathbf T_d(n)=T_0\subset T_1 \subset \ldots\subset T_{\binom{n-1}d}$, where $T_0$ is the full $(d-1)$-skeleton on $n$ vertices. In every step $1\le i \le \binom{n-1}d$, a $d$-face $\sigma_i$ is sampled uniformly at random from $\overline{\SH}(T_{i-1})$  and added to the complex, i.e., $T_i=T_{i-1}\cup\{\sigma_i\}$. Since this process produces a random ordered $d$-hypertree, its support size is $\binom{n-1}d!\cdot |\mathcal T_{n,d}|$, and its entropy does not exceed the logarithm of this number. But this entropy can actually be computed:
\begin{equation}
\label{eqn:entropy}
H(\mathbf T_d(n)) = \sum_{i=0}^{\binom{n-1}d-1} H(\sigma_{i+1} ~|~ T_{i}) = \sum_{i=0}^{\binom{n-1}d-1} \E_{T_i}[\log \left(|\overline{\SH}(T_{i})| \right)].
\end{equation}
The first equality is the chain rule for entropy, and the second equality follows since $\sigma_{i+1}$ is selected uniformly from $\overline{\SH}(T_{i})$.

The random process $\mathbf T_d(n)$ is closely related to the Linial-Meshulam model. Indeed, consider a random ordering $\vec\sigma=\sigma_1,\ldots,\sigma_{\binom n{d+1}}$ of all $d$-faces over $n$ vertices. The $d$-faces $\sigma_1,\ldots,\sigma_M$ along with a full $(d-1)$-skeleton on $n$ vertices constitute the complex $Y_d(n,M)$. Therefore, $\vec\sigma$ can be also used to define the complex $Y_d(n,p)$ which equals to $Y_d(n,\hat M)$, where $\hat M$ $\mbox{Bin}(\binom n{d+1},p)$-distributed. Let us call an index $1\le j \le \binom n{d+1}$ {\em critical} if $\sigma_j\in\overline\SH(Y_d(n,j-1))$. The complex $\mathbf T_d(n)$ equals to $\{ \sigma_j ~|~\mbox{$j$ is critical}\}$ with a full $(d-1)$-skeleton, and moreover, $T_i$ is its subcomplex containing the first $i$ critical faces $\sigma_{j_1},\ldots,\sigma_{j_i}$ in $\vec\sigma$.

Recall that the function $r=r(c)$ depicts the normalized rank of $\LMc$ as defined in Section \ref{sec:back}. If $i=i(n)$ is an integer and $c>0$ real such that $r(c) - i/\binom{n-1}{d}$ is positive and bounded away from zero, we may assume that a.a.s. $T_i \subset \LMc$. Indeed, in such case $\LMc$ a.a.s. has rank greater than $i$, and therefore contains more than $i$ critical faces.
In particular, a.a.s.\ ,
\begin{equation}
\label{eqn:sh_coupling}
|\overline\SH(T_i)| \ge \left|\overline\SH\left(\LMc\right)\right|.
\end{equation}

As explained in Section \ref{sec:back}, the size $\left|\overline\SH\left(\LMc\right)\right|$ is concentrated at $\bar s(c)\binom n{d+1}$. In the proof below we use this accurate estimation {\em for every $c>0$} to deduce the bound stated at Theorem \ref{thm:1}. However, note that even a substantially simpler argument already yields a pretty good bound. Namely, since $\bar s(c)=1$ for $c<c_d^*$, it follows that each of the first $\frac{c_d^*-\varepsilon}{d+1}\binom{n}{d}$ summands in Equation (\ref{eqn:entropy}) equals $(1-o(1))\log{\binom{n}{d+1}}$, for $\varepsilon>0$ arbitrarily small. The other summands in that equation can be easily bounded by the worst-case analysis of Claim \ref{clm:worst_case}. It turns out that for every dimension $d\ge 2$, this simple argument improves the bound of Equation (\ref{eqn:cnd}). In addition, since $c_d^*=d+1-o_d(1),$ it gives a bound of the form $$|\Tnd|\ge  \left( \frac{e^{1-\varepsilon_d}}{d+1}\cdot n\right) ^ {\binom {n-1}d},$$ where $0<\varepsilon_d\to 0$ as $d$ grows. We do not go into further details of this argument, since we derive below a better lower bound by a more careful analysis.

Denote $\lambda_i:=\E_{T_i}[\log \left(|\overline{\SH}(T_{i})| \right)]-\log\binom{n}{d+1}$. We split the summation in Equation (\ref{eqn:entropy}) to four parts.

\begin{claim}
Let $\varepsilon>0$. 
\begin{enumerate}
\item {\em Subcritical:} $~0\le i \le \left(\frac{c_d^*}{d+1}-\varepsilon\right)\binom{n-1}{d}$.
\[
\lambda_i \ge (1-\varepsilon)\log(1-\varepsilon) + \varepsilon\log\left(1-\frac{c_d^*}{d+1}\right).
\]
\item {\em Transition:} $~\left(\frac{c_d^*}{d+1}-\varepsilon\right)\binom{n-1}{d} < i \le \left(\frac{c_d^*}{d+1}+\varepsilon\right)\binom{n-1}{d}$ then,
\[
\lambda_i \ge \log\left(1-\frac{c_d^*}{d+1}-\varepsilon\right).
\]
\item {\em Superctitical:} $~\left(\frac{c_d^*}{d+1}+\varepsilon\right)\binom{n-1}{d} < i \le (1-2\sqrt\varepsilon)\binom{n-1}{d}$ then,
\[
\lambda_i \ge (1-\varepsilon)\log\left(\bar s\left(r^{-1}\left(\frac i{\binom{n-1}{d}}\right)\right)\right) - (1-\varepsilon)\cdot2\sqrt\varepsilon + \varepsilon\log(2\sqrt\varepsilon).
\]
\item {\em Rearguard:} $~(1-2\sqrt\varepsilon)\binom{n-1}{d} < i <\binom{n-1}d$ then,
\[
\lambda_i \ge \log\left( 1-\frac{i}{\binom{n-1}d} \right)
\]
\end{enumerate}
\end{claim}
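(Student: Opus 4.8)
The plan rests on two ingredients. The first is the deterministic estimate of Claim~\ref{clm:worst_case}: every realization $T_i$ of the process is a $d$-acyclic complex with $i$ faces, so $|\overline\SH(T_i)|\ge\binom{n}{d+1}\bigl(1-i/\binom{n-1}{d}\bigr)$ with certainty. The second is the coupling recorded around Equation~(\ref{eqn:sh_coupling}): whenever $r(c)-i/\binom{n-1}{d}$ is positive and bounded away from $0$, a.a.s.\ $T_i\subseteq Y_d(n,c/n)$, hence $|\overline\SH(T_i)|\ge|\overline\SH(Y_d(n,c/n))|$, and this lower bound concentrates around $\bar s(c)\binom{n}{d+1}$ by the Linial--Meshulam rank/shadow estimates of Section~\ref{sec:back} (valid for $c\ne c_d^*$). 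Throughout I write $Z_i=|\overline\SH(T_i)|/\binom{n}{d+1}\in(0,1]$, so $\lambda_i=\E[\log Z_i]$, and I take $\varepsilon$ small and $n$ large. Each of the four bounds will come from combining a high-probability lower bound on $Z_i$ with the deterministic floor through the elementary fact: if $Z_i\ge a$ with probability at least $1-\varepsilon$ and $Z_i\ge b$ surely, with $0<b\le a\le 1$, then $\E[\log Z_i]\ge(1-\varepsilon)\log a+\varepsilon\log b$, since $q\mapsto \log a + q(\log b-\log a)$ is non-increasing in $q\in[0,1]$.

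The \emph{transition} and \emph{rearguard} cases need only the deterministic floor. In the rearguard range $i<\binom{n-1}{d}$, so $Z_i\ge 1-i/\binom{n-1}{d}>0$ surely and $\lambda_i\ge\log\bigl(1-i/\binom{n-1}{d}\bigr)$. In the transition range $i/\binom{n-1}{d}\le\frac{c_d^*}{d+1}+\varepsilon$, so $Z_i\ge 1-\frac{c_d^*}{d+1}-\varepsilon$ surely, which is positive since $c_d^*<d+1$, giving $\lambda_i\ge\log\bigl(1-\frac{c_d^*}{d+1}-\varepsilon\bigr)$. For the \emph{subcritical} range $i/\binom{n-1}{d}\le\frac{c_d^*}{d+1}-\varepsilon$, I would fix the constant $c:=c_d^*-\tfrac{(d+1)\varepsilon}{4}<c_d^*$; then $r(c)=c/(d+1)$, the gap $r(c)-i/\binom{n-1}{d}\ge\varepsilon/4$ is bounded away from $0$, and $\bar s(c)=1$. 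So by the coupling $Z_i\ge 1-o(1)\ge 1-\varepsilon$ with probability $\ge 1-\varepsilon$, while $Z_i\ge 1-\frac{c_d^*}{d+1}$ surely; the elementary fact with $a=1-\varepsilon$, $b=1-\frac{c_d^*}{d+1}$ (indeed $a\ge b$ for small $\varepsilon$) yields $\lambda_i\ge(1-\varepsilon)\log(1-\varepsilon)+\varepsilon\log\bigl(1-\frac{c_d^*}{d+1}\bigr)$.

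The \emph{supercritical} case is the substantive one. Writing $\rho=i/\binom{n-1}{d}\in\bigl(\tfrac{c_d^*}{d+1}+\varepsilon,\,1-2\sqrt\varepsilon\bigr]$, we have $r^{-1}(\rho)\in(c_d^*,\infty)$ since $\rho>r(c_d^*)=\tfrac{c_d^*}{d+1}$ and $\rho<1$, and $\bar s\circ r^{-1}$ is continuous and positive on the compact interval $\bigl[\tfrac{c_d^*}{d+1}+\varepsilon,\,1-2\sqrt\varepsilon\bigr]$, hence bounded there from below by a constant $m_\varepsilon>0$; moreover the formulas give $\bar s(r^{-1}(\rho))\sim(d+1)(1-\rho)$ as $\rho\to1$, so $m_\varepsilon\ge 2\sqrt\varepsilon\,e^{2\sqrt\varepsilon}$ once $\varepsilon$ is small (using $d\ge2$). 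By uniform continuity I would pick a constant margin $\eta=\eta(\varepsilon)>0$ with $\rho+\eta<1$ and $\bar s(r^{-1}(\rho+\eta))\ge\bar s(r^{-1}(\rho))e^{-\sqrt\varepsilon}$ throughout the range, and set $c:=r^{-1}(\rho+\eta)>c_d^*$, so that $r(c)-\rho=\eta$ is bounded away from $0$. The coupling then gives, a.a.s., $Z_i\ge\bar s(c)-o(1)\ge\bar s(r^{-1}(\rho))e^{-\sqrt\varepsilon}-o(1)\ge\bar s(r^{-1}(\rho))e^{-2\sqrt\varepsilon}$ (the $o(1)$ absorbed because $\bar s(r^{-1}(\rho))\ge m_\varepsilon$), while the floor gives $Z_i\ge 1-\rho\ge 2\sqrt\varepsilon$ surely. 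Since $\bar s(r^{-1}(\rho))e^{-2\sqrt\varepsilon}\ge 2\sqrt\varepsilon$, the elementary fact with $a=\bar s(r^{-1}(\rho))e^{-2\sqrt\varepsilon}$ and $b=2\sqrt\varepsilon$ produces $\lambda_i\ge(1-\varepsilon)\log\bar s(r^{-1}(\rho))-(1-\varepsilon)\,2\sqrt\varepsilon+\varepsilon\log(2\sqrt\varepsilon)$, exactly as stated.

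The hard part is this supercritical analysis. The deterministic cases are immediate and the subcritical case is easy because $\bar s\equiv1$ below $c_d^*$, so there no continuity subtlety arises; in the supercritical range, though, the coupling only compares $T_i$ with $Y_d(n,c/n)$ for $c$ strictly larger than $r^{-1}(\rho)$, and one must verify that the resulting loss in $\bar s$ is swallowed by the slack. This is precisely where uniform continuity of $\bar s\circ r^{-1}$ and the lower bound $\bar s(r^{-1}(\rho))\gtrsim 1-\rho$ near $\rho=1$ are used, and it explains the truncation at $1-2\sqrt\varepsilon$: it keeps the deterministic floor $1-\rho$, and hence all the logarithms in play, bounded away from $0$. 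A further point to check carefully, as in the discussion before Equation~(\ref{eqn:sh_coupling}), is that a.a.s.\ $T_i$ genuinely is a subcomplex of $Y_d(n,c/n)$: this follows from the concentration of $\rank(Y_d(n,c/n))$ near $r(c)\binom{n-1}{d}$ together with the description of $T_i$ as the first $i$ critical faces of the common random ordering $\vec\sigma$.
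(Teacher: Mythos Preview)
Your proof is correct and follows essentially the same route as the paper: the deterministic floor from Claim~\ref{clm:worst_case} handles the Transition and Rearguard ranges outright and supplies the $\varepsilon$-probability fallback elsewhere, while the coupling of Equation~(\ref{eqn:sh_coupling}) with the shadow concentration for $Y_d(n,c/n)$ supplies the high-probability bound in the Subcritical and Supercritical ranges. The only cosmetic differences are that the paper works with additive errors ($\bar s(c)>\bar s(c_i)-\varepsilon$) rather than your multiplicative $e^{-\sqrt\varepsilon}$ factors, and that it obtains the needed lower bound $\bar s(c_i)\ge 1-i/\binom{n-1}{d}\ge 2\sqrt\varepsilon$ directly from Claim~\ref{clm:worst_case} (applied to a maximal acyclic subcomplex of $Y_d(n,c_i/n)$) instead of via your asymptotic $\bar s(r^{-1}(\rho))\sim(d+1)(1-\rho)$ together with the monotonicity of $\bar s\circ r^{-1}$.
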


\begin{proof}
For the Transition and Rearguard ranges, the inequality is just that of Claim \ref{clm:worst_case}. For the Subcritical range, we fix some $c$ with $\frac{c_d^*}{d+1}-\varepsilon<c<\frac{c_d^*}{d+1}$ and  apply Equation (\ref{eqn:sh_coupling}). This implies that with probability at least $1-\varepsilon$, there holds
$|\overline\SH(T_i)| > \binom{n}{d+1}(1-\varepsilon)$, since $\LMc$ has a.a.s.\ a shadow of vanishingly small density. By Claim \ref{clm:worst_case}, $|\overline\SH(T_i)| > \binom{n}{d+1}(1-c_d^*/(d+1))$ always holds. Therefore,
\[
\lambda_i \ge (1-\varepsilon)\log\left(1-\varepsilon \right) + \varepsilon\log\left(1-c_d^*/(d+1) \right).
\]
We turn to consider the Supercritical range. Let $~\left(\frac{c_d^*}{d+1}+\varepsilon\right)\binom{n-1}{d} < i \le (1-2\sqrt\varepsilon)\binom{n-1}{d}$ and $c_i=r^{-1}\left(\frac i{\binom{n-1}{d}}\right)>c_d^*$. By using Claim \ref{clm:worst_case} as in the Subcritical item, it suffices to show that there exists $c>0$ such that (i) $r(c) - i/\binom{n-1}{d}\gg 0$ and (ii) a.a.s.,
\[
\log\left|\overline\SH\left(\LMc\right)\right| \ge \log\binom n{d+1} +\log(\bar s(c_i))- 2\sqrt\varepsilon.
\]
Since $\bar s$ is continuous when $c>c_d^*$ and since $r$ is strictly monotone, we may choose $c>c_i$ such that both $\bar s(c)>\bar s(c_i)-\varepsilon$ and condition (i) is satisfied. In addition, a.a.s.\ $\left|\overline\SH\left(\LMc\right)\right| \ge \binom{n}{d+1}(\bar s(c) - \varepsilon)$, and therefore,
\[
\log\left|\overline\SH\left(\LMc\right)\right| - \log\binom{n}{d+1} >  \log(\bar s(c_i) - 2\varepsilon) \ge \log(\bar s(c_i)) - 4\varepsilon/\bar s(c_i),
\]
where the last inequality is by straightforward analysis. By Claim \ref{clm:worst_case}, $\bar s(c_i) \ge 1-i/\binom{n-1}d \ge 2\sqrt\varepsilon$. which concludes the proof.
\end{proof}

Let us estimate the sum $\sum_{i=0}^{\binom {n-1}d -1} \lambda_i$. We use the notation $o_\varepsilon(1)$ for terms that vanish as $\varepsilon\to 0$. Since every index in the Subcritical regime contributes $o_\varepsilon(1)$ to the sum, the entire range contributes only $\binom {n-1}d \cdot o_\varepsilon(1)$. The same upper bound applies as well to the Transition regime, which contains $2\varepsilon\binom {n-1}d$ bounded terms. The entire contribution of the Rearguard regime equals to 
\[
\log \left( \frac{\left( 2\sqrt\varepsilon\binom{n-1}d \right) !}{\binom{n-1}d^{\binom{n-1}d}} \right) = \binom{n-1}d\cdot o_\varepsilon(1).
\]
In addition, the last two terms in each $\lambda_i$ in the Supercritical regime are of order $o_\varepsilon(1)$ as well. We conclude that
\[
H(\mathbf T_d(n)) \ge \binom{n-1}{d}\left(\log\binom n{d+1} + (1-\varepsilon)\frac{1}{\binom{n-1}{d}}\sum_{i=\frac{c_d^*}{d+1}+\varepsilon }^{1-2\sqrt\varepsilon}\log\left(\bar s\left(r^{-1}\left(\frac i{\binom{n-1}{d}}\right)\right)\right) +o_\varepsilon(1) \right).
\]
On the other hand, the entropy cannot exceed the logarithm of the cardinality of the support, i.e.,
\[
H(\mathbf T_d(n))\le \log |\mathcal T_{d,n}| + \binom{n-1}{d}\left(\log \binom{n-1}{d} - 1\right).
\]
Letting $\varepsilon\to 0$ and $\alpha_d:=\int_{c_d^*/{d+1}}^{1} \log\bar s(r^{-1}(x))dx$ yields

$$|\Tnd|\ge  \left( (1-o_n(1)) \frac{e^{1+\alpha_d}}{d+1}\cdot n\right) ^ {\binom {n-1}d}.$$ 
In order to complete the proof, we need to establish the integral form for $\alpha_d$ as stated in the theorem. Consider the function $t$ that maps every $c>c_d^*$ to the smallest positive root of $t=e^{-c(1-t)^d}.$  The derivative of $t$ w.r.t.\ $c$ is $t' = -\frac{t(1-t)^{d+1}}{1-t+dt\ln{t}}$ (See \cite{LP}). In addition, recall that for $c>c_d^*,~~\bar s(c)= 1-(1-t(c))^{d+1}$ and a straightforward computation yields that $r'(c)=\bar s(c)/(d+1).$ The desired integral form is obtained by the change of variables $y=t(r^{-1}(x))$.

\subsection{Proof of Theorem \ref{thm:upper}}
Kalai's upper bound $|\Tnd| < \left( \frac {e}{d+1}\cdot n\right) ^ {\binom {n-1}d}$ accounts for {\em all} $n$-vertex $d$-complexes with $\binom {n-1}d$ $d$-faces.
We slightly improve this bound by estimating the probability that a uniformly sampled complex with these parameters is a $d$-hypertree. Theorem \ref{thm:upper} immediately follows from the following Lemma.
\begin{lemma}
The probability that $Y:=Y_d\left(n,\binom{n-1}d \right)$ is a $d$-hypertree is $\exp(-\Omega(n^d))$.
\end{lemma}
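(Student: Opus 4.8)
The plan is to show that a uniformly random $n$-vertex $d$-complex $Y$ with exactly $m:=\binom{n-1}{d}$ top faces is exponentially unlikely to be a $d$-hypertree, by exhibiting a much larger-than-expected number of complexes with this many faces that are \emph{not} $d$-acyclic, or equivalently by bounding from above the number of $d$-hypertrees relative to the total $\binom{\binom{n}{d+1}}{m}$. The cleanest route is to reuse the machinery already set up for Theorem~\ref{thm:1}: run the random $d$-acyclic process $\mathbf T_d(n)$ and compare it with the process that adds $m$ uniformly random $d$-faces with no acyclicity constraint. By the Linial--Meshulam rank result (part (I) of the theorem in Section~\ref{sec:back}), a uniformly random set of $m=\binom{n-1}{d}$ $d$-faces has rank $(1-\delta)m$ for some constant $\delta>0$ bounded away from $0$: indeed $Y_d(n,M)$ with $M=\binom{n-1}{d}$ corresponds to $c=d+1$, and $r(d+1)<r(\infty)\le 1$ strictly because $c_d^*<d+1$ and $r$ is strictly increasing past $c_d^*$ with $r(c)<c/(d+1)$ there. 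Hence a typical such complex has a nonzero $\beta_d$ of order $\Theta(n^d)$ and is certainly not a $d$-hypertree; the content is to upgrade ``typical'' to ``all but an $\exp(-\Omega(n^d))$ fraction.''

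The first step is therefore a large-deviation statement: for $Y=Y_d(n,\binom{n-1}{d})$, $\Pr[\rank(Y)=m]=\exp(-\Omega(n^d))$. I would prove this via the same critical-face coupling used before. Order all $\binom{n}{d+1}$ faces uniformly at random; $Y$ is the first $m$ of them. The event $\{\rank(Y)=m\}$ says that \emph{every} one of these first $m$ faces is critical, i.e.\ lands in $\overline{\SH}$ of its predecessors. But once we have added roughly $\frac{c_d^*}{d+1}m$ faces the expected shadow complement density $\bar s(c)$ has dropped strictly below $1$ (it equals $1-(1-t_c)^{d+1}<1$ for $c>c_d^*$), so from that point on each new face is critical only with probability bounded away from $1$; concentration of $|\overline{\SH}|$ around $\bar s(c)\binom{n}{d+1}$ (the ``in addition'' clause of the theorem) shows this holds with overwhelming probability. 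A linear-in-$m$ number of the remaining steps each independently fail to be critical with constant probability, conditioned on the shadow-density being near its mean; a standard supermartingale / Azuma argument along the revealed ordering then gives that with probability $1-\exp(-\Omega(n^d))$ at least $\Omega(n^d)$ of the first $m$ faces are non-critical, hence $\rank(Y)\le m-\Omega(n^d)$, hence $\beta_d(Y)=m-\rank(Y)>0$ and $Y$ is not a $d$-hypertree. (Alternatively one can bypass martingales by a direct counting argument: the number of ordered $m$-tuples of faces all of which are critical is at most $\binom{n-1}{d}!\cdot|\Tnd|$, and comparing with the $o_n(1)$-sharp value of $|\Tnd|$ from Theorem~\ref{thm:1} against the obvious upper bound $(\tfrac{e}{d+1}n)^{m}$ already forces a multiplicative loss of $e^{-\Omega(n^d)}$ since $\alpha_d<0$.)

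Concretely, I expect the slickest writeup to use the last parenthetical: by Theorem~\ref{thm:1} we have the matching-order upper information $|\Tnd|\le (\tfrac{e-\varepsilon_d'}{d+1}n)^{m}$ would suffice, but we do not even have that yet — that is what we are proving — so one should instead argue directly that $|\Tnd|\le e^{-\Omega(n^d)}\cdot \binom{\binom{n}{d+1}}{m}$. Expanding $\binom{\binom{n}{d+1}}{m}=(1-o_n(1))^{m}(\tfrac{e}{d+1}n)^{m}$ via Stirling, this is exactly the statement that the random-acyclic-process entropy bound from the proof of Theorem~\ref{thm:1} is bounded \emph{below} $\log\binom{\binom n{d+1}}{m}$ by $\Omega(n^d)$, i.e.\ that $\alpha_d<0$ is a strict, quantitative inequality (which it is, since $\bar s(r^{-1}(x))<1$ on a set of positive measure). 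So the proof reduces to: (i) $\alpha_d<0$ strictly, giving $|\Tnd|/\binom{\binom n{d+1}}{m}\le e^{(1+o(1))\alpha_d\binom{n-1}{d}}=\exp(-\Omega(n^d))$; and this quotient is precisely the desired probability.

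The main obstacle is making the passage from ``expected shadow density is $<1$'' to an honest $\exp(-\Omega(n^d))$ \emph{lower-tail} bound on $\rank(Y)$ — the concentration statements quoted from \cite{LP} are stated as ``deviates from expectation by more than $\varepsilon$ with probability $\to 0$,'' not with an explicit exponential rate, so one either needs the exponential version of that concentration (which should follow from bounded-difference inequalities on the face-by-face exposure, since adding one face changes $|\overline{\SH}|$ by $O(n^{d-1}) = o(\binom{n}{d+1})$ appropriately, or more carefully from the vertex-exposure martingale) or one argues via the entropy/counting route which cleanly sidesteps it by comparing the two support sizes directly. I would write it the second way: compute $\log|\Tnd| \le H(\mathbf T_d(n)) - \binom{n-1}{d}(\log\binom{n-1}{d}-1)$, plug in the Supercritical-range bound $\lambda_i\le (1+o_\varepsilon(1))\log\bar s(r^{-1}(i/\binom{n-1}{d}))$ together with the \emph{strict} gap $\int_{c_d^*/(d+1)}^1\log\bar s(r^{-1}(x))\,dx=\alpha_d<0$, and conclude that the ratio $|\Tnd|/\binom{\binom n{d+1}}{\binom{n-1}{d}}\le \exp\big((\alpha_d+o(1))\binom{n-1}{d}\big)$, which is $\exp(-\Omega(n^d))$.
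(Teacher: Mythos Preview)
Your preferred route --- the entropy/counting argument in the last paragraph --- is wrong in direction and cannot be repaired. The inequality $H(\mathbf T_d(n))\le \log\bigl(\binom{n-1}{d}!\cdot|\Tnd|\bigr)$ goes the other way: entropy is at most the log of the support size, so the random-acyclic-process computation yields only a \emph{lower} bound on $|\Tnd|$ (this is precisely how Theorem~\ref{thm:1} is proved). You cannot flip it to get $\log|\Tnd|\le H(\mathbf T_d(n))-\binom{n-1}{d}(\log\binom{n-1}{d}-1)$, because $\mathbf T_d(n)$ is not the uniform distribution on ordered hypertrees. Relatedly, the claimed upper bound $\lambda_i\le (1+o_\varepsilon(1))\log\bar s(r^{-1}(i/\binom{n-1}{d}))$ is unsupported: the containment $T_i\subset Y_d(n,c/n)$ gives $\overline{\SH}(T_i)\supseteq\overline{\SH}(Y_d(n,c/n))$, hence only the lower bound on $\lambda_i$. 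So the ``slick'' version is circular: it needs exactly the upper bound on $|\Tnd|$ that the lemma is meant to establish.

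Your first sketch --- Azuma along the face ordering --- is the right idea, and is essentially what the paper does, but the paper executes it more cleanly than your version. You try to track whether each of the first $m$ faces is critical, and you correctly flag the obstacle that the shadow-density concentration from \cite{LP} is only stated as $o(1)$, not at an exponential rate. The paper sidesteps this entirely: it does not condition on the shadow and does not track critical indices. Instead it passes to a slightly larger complex $\tilde Y_d(n,M)$ with $M=(1+\tfrac{\varepsilon}{d+1})\binom{n-1}{d}$ faces chosen i.i.d.\ uniformly (with repetitions discarded), notes that $Y\subset\tilde Y_d(n,M)$ except with probability $\exp(-\Theta(n^d))$, and observes that if $Y$ is a hypertree then $\tilde Y_d(n,M)$ has full rank $\binom{n-1}{d}$. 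Now the rank $R$ of $\tilde Y_d(n,M)$ is a $1$-Lipschitz function of the $M$ \emph{independent} coordinates $\sigma_1,\ldots,\sigma_M$; its expectation is at most $r(d+1+2\varepsilon)\binom{n-1}{d}+o(n^d)$ via the containment $\tilde Y_d(n,M)\subset Y_d(n,(d+1+2\varepsilon)/n)$, and since $r(d+1)<1$ strictly one can fix $\varepsilon$ so that $\E[R]\le(1-\delta)\binom{n-1}{d}$. A single application of Azuma then gives $\Pr[R=\binom{n-1}{d}]\le\exp(-\Omega(n^d))$. The point is that only the \emph{expectation} of the rank is needed from \cite{LP}; the exponential concentration comes for free from bounded differences, with no need to control the shadow pathwise.
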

\begin{proof}
Let $\sigma_1,\sigma_2,\ldots$ be an infinite sequence of $d$-faces on a set of $n$ vertices, each chosen {\em independently} uniformly at random. 
Consider the $n$-vertex complex $\tilde Y_d\left(n,M \right)$ that has a full $(d-1)$-skeleton and the first $M$ $d$-faces in the sequence, where repetitions of the same face get removed. The complex $Y$ is equivalent to $\tilde Y_d(n,\bar M)$ where $\bar M$ is the (random) index for which the prefix $\sigma_1,\ldots,\sigma_{\bar M}$ contains exactly $\binom{n-1}d$ distinct $d$-faces. 

Let $\varepsilon>0$ be a small constant. If $M=(1+\frac{\varepsilon}{d+1})\binom{n-1}d$ then $Y$ is contained in $\tilde Y_d(n,M)$ with probability $1-\exp(-\Theta(n^d))$ by a standard measure concentration argument. Let us denote by $R$ the rank of $\tilde Y_d(n,M)$.

The probability that $Y$ is a $d$-hypertree is bounded by $\exp(-\Theta(n^d))$ plus the probability that $\tilde Y_d(n,M)$ has full rank, i.e., $R=\binom{n-1}d$. In addition, $\tilde Y_d(n,M)$ is a.a.s.\ contained in $Y_d(n,p)$ for $p=\frac{d+1+2\varepsilon}n$, since a $\mbox{Bin}\left(\binom n{d+1},p\right)$ random variable is a.a.s.\ greater than $M$. Therefore, the expectation $\E[R]$ is bounded, up to an additive error term of $o(n^d)$, by the expected rank of $Y_d(n,p)$ which equals to $\binom{n-1}d\cdot r(d+1+2\varepsilon)$. Since $r(d+1)<1$, we can choose $\varepsilon$ so that $\E[R]<(1-\delta)\binom{n-1}d$ for some constant $\delta>0$.
The proof is concluded by observing that $R$ is a $1$-Lipschitz function that depends on $M=(1+\frac{\varepsilon}{d+1})\binom{n-1}d$ independent variables. By Azuma's inequality \cite{Azuma},
\[
\Pr\left[R=\binom{n-1}d\right]\le \exp\left(- \frac{\delta^2}{2\left(1+\frac{\varepsilon}{d+1}\right)}\binom{n-1}d \right).
\]
\end{proof}

\section{The random $1$-out $d$-complex}
\label{sec:1out}
The random $1$-out $d$-complex $S_d(n,1)$  is an $n$-vertex $d$-dimensional complex with a full $(d-1)$-skeleton, in which every $(d-1)$-face $\tau$ selects, independently uniformly at random, a $d$-face $\sigma_\tau$ that contains it. To wit, $\tau$ selects a uniform random vertex $v_\tau\notin\tau$ and $\sigma_\tau=v_\tau\tau$. The selection process is done independently, but we remove multiply selected $d$-faces to maintain a simplicial complex. 
The purpose of this section is to show that the $d$-dimensional homology of $S_d(n,1)$ is a.a.s.\ of dimension $o(n^d)$. Moreover, the complex can be made very close to $d$-acyclic by a random sparsification. The upper bound on the dimension of the homology is proved using the spectral measure of the local weak limit as presented in Section \ref{sec:back}.

\subsection{The local weak limit of the random $1$-out process}
We first describe the local weak limit of $S=S_d(n,1)$. That is, the limiting distribution of local neighborhoods of a root $(d-1)$-face in $S$. This local weak limit is a random $d$-tree that we denote by $\textbf{B}_d$. The number of children $m_\tau$ of a $(d-1)$-face $\tau$ in $\textbf{B}_d$ is independently distributed, but the $(d-1)$-faces and $d$-faces in $\textbf{B}_d$ come in two types - $(A)$ and $(B)$. The type of every $(d-1)$-face $\tau$ always coincides with that of its selected $d$-face $\sigma_\tau$. The type indicates whether, when exposing the neighborhood of the root, the selected $d$-face $\sigma_\tau$ appeared $(A)$ after or $(B)$ before the selecting $(d-1)$-face $\tau$. For instance, the root is always of type $(A)$.

For a $(d-1)$-face $\tau$ of type $(A)$, the number of children $m_\tau$ is $1+\mbox{Poi}(d)$ distributed. One descendant $d$-face is $\sigma_\tau$ whose type is also $(A)$, whence all its $(d-1)$-subfaces also have type $(A)$. The other $\mbox{Poi}(d)$ descendant $d$-faces have type $(B)$. These are the $d$-faces that contain $\tau$ but were selected by some other $(d-1)$-face. Therefore, a type $(B)$ $d$-face $\sigma=\sigma_\rho$ has precisely one descending $(d-1)$-face $\rho$ of type $(B)$ whereas all others have type $(A)$. On the other hand, for $\tau$ of type $(B)$, the distribution of $m_\tau$ is $\mbox{Poi}(d)$, with all descending $d$-faces of type $(B)$ (See Figure \ref{fig:dtree}).

To generate a $d$-tree $\textbf{B}_d$, we start with a root $(d-1)$-face of type $(A)$, generate its descendant $d$-faces, and keep track of the types of the new $(d-1)$-faces. At each step $k>0$, we generate the descendants of the $(d-1)$-faces at distance $2k$ from the root according to their type.

For $d=1$, $\textbf{B}_1$ is a well-known random tree model. Type $(A)$ vertices form an infinite rooted path, and every such vertex ``grows" a Galton-Watson Poi$(1)$ branching process with vertices of type $(B)$. This is known to be the local weak limit of a uniform spanning tree ~\cite{grimmett}.

\begin{figure}[!h]
\begin{center}
\begin{tikzpicture}
\begin{scope}[xshift=-4cm]
\node [circle,draw] (dm1A)					{A};
\node [rectangle,draw] (dA)	[below=of dm1A, xshift=-2.5cm]		{A} edge [<-,ultra thick] (dm1A);
\node [circle,draw] (dm1A1)	[below=of dA, xshift=-1cm]	{A} edge [-] (dA); 
\node [circle,draw] (dm1A2)	[below=of dA, , xshift=-.2cm]				{A} edge [-] (dA); 
\node [circle,draw] (dm1A1)	[below=of dA, xshift=0.6cm]	{A} edge [-] (dA); 

\node [rectangle,draw] (dB)	[below=of dm1A]		{B} edge [-,dashed] (dm1A);
\node [circle,draw] (dm1B1)	[below=of dB, xshift=-0.8cm]	{B} edge [->,ultra thick] (dB); 
\node [circle,draw] (dm1A3)	[below=of dB, , xshift=+0.0cm]				{A} edge [-] (dB); 
\node [circle,draw] (dm1A3)	[below=of dB, , xshift=+0.8cm]				{A} edge [-] (dB); 

\node [rectangle,draw] (dB1)	[below=of dm1A,xshift=2.5cm]		{B} edge [-,dashed] (dm1A);
\node [circle,draw] (dm1B2)	[below=of dB1, xshift=-0.5cm]	{B} edge [->,ultra thick] (dB1); 
\node [circle,draw] (dm1A4)	[below=of dB1, , xshift=+0.3cm]				{A} edge [-](dB1); 
\node [circle,draw] (dm1A4)	[below=of dB1, , xshift=+1.1cm]				{A} edge [-](dB1); 
 \node[] (txt) [right=of dB,xshift=-1.3cm,yshift=0.7cm]	{$\mbox{Poi}(d)$};
\end{scope}
   \begin{scope}[xshift=+4cm]
\node [circle,draw] (dm1B)					{B};
\node [rectangle,draw] (dB1)	[below=of dm1B,xshift=-1.1cm]		{B} edge [-,dashed] (dm1B);
\node [circle,draw] (dm1B2)	[below=of dB1, xshift=-1cm]	{B} edge [->,ultra thick] (dB1); 
\node [circle,draw] (dm1A4)	[below=of dB1, , xshift=-0.2cm]				{A} edge [-](dB1); 
\node [circle,draw] (dm1A5)	[below=of dB1,xshift=+0.6cm]				{A} edge [-](dB1); 

\node [rectangle,draw] (dB1)	[below=of dm1B,xshift=1.1cm]		{B} edge [-,dashed] (dm1B);
\node [circle,draw] (dm1B2)	[below=of dB1, xshift=-0.5cm]	{B} edge [->,ultra thick] (dB1); 
\node [circle,draw] (dm1A4)	[below=of dB1, , xshift=+0.3cm]				{A} edge [-](dB1); 
\node [circle,draw] (dm1A5)	[below=of dB1,xshift=+1.1cm]				{A} edge [-](dB1); 
 \node[] (txt) [below=of dm1B,yshift=0.7cm]	{$\mbox{Poi}(d)$};
\end{scope}
\end{tikzpicture}
\end{center}
\caption{Illustration of the inclusion graph of the $d$-tree $\textbf{B}_d$ for $d=3$. Circles represent $(d-1)$-faces, and squares are $d$-faces, each vertex is marked with the face's type. The thick arrows are included only to explain how the selection process of $S_d(n,1)$ is reflected in the $d$-tree. A dashed line demonstrate that the number of type $(B)$ $d$-faces descending from a $(d-1)$-face is $\mbox{Poi}(d)$ randomly distributed. Since the root has type $(A)$, the information in this figure completely determines the distribution of $\textbf{B}_d$.}
\label{fig:dtree}
\end{figure}
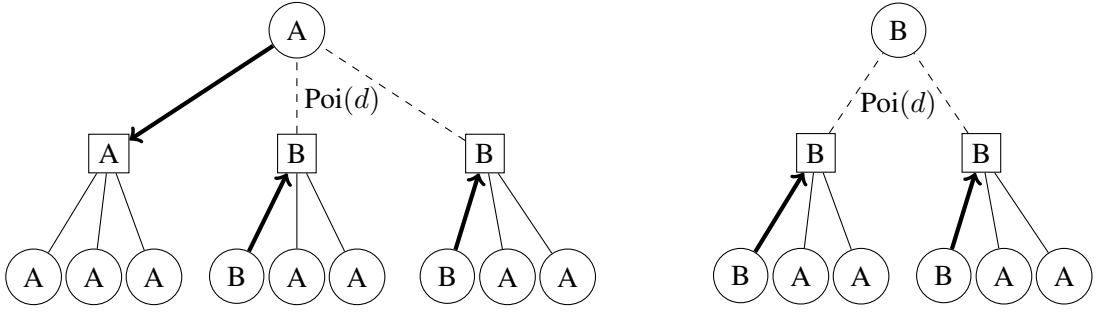

\begin{claim}
$\textbf{B}_d$ is the local weak limit of $S_d(n,1)$.
\end{claim}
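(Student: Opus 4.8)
The plan is to reveal the $k$-neighborhood of a uniformly random root $(d-1)$-face $o_n$ by a breadth-first exploration of the inclusion graph $G(S)$, $S=S_d(n,1)$, and to couple this exploration with the branching process that generates $\textbf{B}_d$. Fix $k$. We explore in rounds: upon first reaching a $(d-1)$-face $\tau$ we reveal, if not already revealed, the vertex $v_\tau\notin\tau$ that $\tau$ selected, together with \emph{all} $d$-faces $w\tau$, $w\notin\tau$, that lie in $S$; upon first reaching a $d$-face we reveal its $d$ other $(d-1)$-subfaces. Every exposed face is labeled $A$ or $B$ exactly as in the definition of $\textbf{B}_d$: a $(d-1)$-face $\tau$ and its chosen $d$-face $\sigma_\tau$ get type $A$ if $\sigma_\tau$ is exposed no sooner than $\tau$, and type $B$ otherwise; the root gets type $A$.

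Two elementary observations drive the coupling, under the proviso (justified in the last paragraph) that with probability $1-o_n(1)$ only $O_k(1)$ faces are ever exposed. First, when $\tau$ is reached as a type-$A$ face its selection has not yet been revealed, and $v_\tau$ is uniform among the $n-d$ vertices outside $\tau$; the probability that $\sigma_\tau$ or any of its $(d-1)$-subfaces coincides with a previously exposed face is $O_k(1/n)=o_n(1)$. Hence a.a.s.\ $\tau$ acquires a single fresh descendant $d$-face $\sigma_\tau$, which receives type $A$, and whose $d$ remaining subfaces in turn receive type $A$ (each, by the same argument, a.a.s.\ selects a fresh $d$-face). Second --- the crux --- for a given $(d-1)$-face $\tau$ the $d$-faces $w\tau$, $w\notin\tau$, have pairwise disjoint collections of $(d-1)$-subfaces other than $\tau$, and a subface $\rho\ne\tau$ of $w\tau$ ``selects'' $w\tau$ precisely when $\rho$ picks the unique vertex completing it; hence the events $\{w\tau\in S$ via a subface $\ne\tau\}$ are mutually independent, each of probability $1-(1-\tfrac1{n-d})^{d}$. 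Discarding $\tau$'s own selection, its exploration parent, and any co-face whose introduction would duplicate a previously exposed face, the number of such ``back-selected'' co-faces of $\tau$ is $\mbox{Bin}\!\bigl(n-O_k(1),\,1-(1-\tfrac1{n-d})^{d}\bigr)$, which converges in distribution to $\mbox{Poi}(d)$; all of them receive type $B$. Assembling these facts gives exactly the offspring law of $\textbf{B}_d$: a type-$A$ $(d-1)$-face has $1+\mbox{Poi}(d)$ children (one type-$A$ $d$-face and $\mbox{Poi}(d)$ type-$B$ ones); a type-$B$ $(d-1)$-face $\tau$ --- whose selected $d$-face is, a.a.s., its exploration parent, since $\tau$ selecting an earlier-exposed $d$-face other than its parent has probability $o_n(1)$ --- has $\mbox{Poi}(d)$ children, all of type $B$; a type-$A$ $d$-face passes type $A$ to its remaining subfaces; and a type-$B$ $d$-face $\sigma$, which lies in $S$ because a.a.s.\ exactly one of its subfaces $\ne\tau$ selected it (two doing so has probability $O(1/n^2)$), passes type $B$ to that one subface and type $A$ to the other $d-1$.

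To upgrade this to local weak convergence, observe that, away from the $o_n(1)$-probability ``coincidence'' events above, the exposed neighborhood is stochastically dominated by the depth-$k$ truncation of $\textbf{B}_d$, which has finite expected size (every $(d-1)$-face has $\le 1+\mbox{Poi}(d)$ children, every $d$-face exactly $d$, so the expected size at depth $j$ is at most $(d+1)^{j}$). Thus for every $\delta>0$ there is $M=M(k,\delta)$ with $\Pr[\text{more than }M\text{ faces exposed}]<\delta$ uniformly in $n$ --- this is the meaning of the $O_k(1)$ bounds used above --- and the union over the $O_k(1)$ relevant pairs of faces shows the total probability of a coincidence is $o_n(1)$. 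On the complementary event the coupling produces a rooted graph $(G(S),o_n)_k$ isomorphic, with matching types, to $(G(\textbf{B}_d),o)_k$, so for every fixed $k$ and every finite rooted graph $(H,r)$,
\[
\Pr\bigl[(G(S),o_n)_k\cong(H,r)\bigr]\xrightarrow{\,n\to\infty\,}\Pr\bigl[(G(\textbf{B}_d),o)_k\cong(H,r)\bigr],
\]
which is the assertion.

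The main obstacle is keeping the coupling bookkeeping airtight: one must simultaneously keep the exposed set bounded so that every $\Theta(1/n)$ collision probability is absorbed, check that conditioning on the revealed history does not bias the selections of the not-yet-reached $(d-1)$-faces, and, for each type-$B$ $d$-face, correctly identify which of its subfaces plays the ``selecting'' role so that the $A$/$B$ labels propagate exactly as in Figure \ref{fig:dtree}. Once this is arranged, the distributional convergence is the routine Poisson approximation of binomials with parameter $\Theta(d/n)$.
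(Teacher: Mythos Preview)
Your proposal is correct and follows essentially the same route as the paper: expose the $k$-neighborhood of a random root by BFS, argue that coincidences (a newly revealed face coinciding with an already-exposed one) are rare because the neighborhood is small, and verify that the offspring counts converge to those of $\textbf{B}_d$. The paper controls the neighborhood size via a global a.a.s.\ $O(\log n)$ degree bound, yielding polylogarithmic neighborhoods; you instead use a tightness argument via branching-process domination. Both are standard and adequate.

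One small imprecision worth flagging: the BFS in $S$ is not literally stochastically dominated by the depth-$k$ truncation of $\textbf{B}_d$, since $\mbox{Bin}(n-d,p)$ with $p=1-(1-\tfrac{1}{n-d})^d$ and $\mbox{Poi}(d)$ are not comparable in the stochastic order. What you actually have is domination by the branching process with the binomial offspring law itself, and your expected-size bound of $(d+1)^j$ at depth $j$ applies verbatim to that process (its mean is $1+d\cdot\tfrac{n-d}{n-d}=d+1$), so tightness follows all the same. This does not affect the argument.
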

\begin{proof}
It is easy to observe that a.a.s.\ all the $(d-1)$-faces in $S$ have degree $O(\log n)$. Indeed, up to negligible duplications, the degree of a $(d-1)$-face $\tau$ is one plus the number of $(d-1)$-faces that chose a $d$-face containing $\tau$, which is $\mbox{Bin}\left( (n-d)d,\frac{1}{n-d}\right)$-distributed. 

Fix some root $(d-1)$-face $o\in S$ and an integer $k>0$. Both the number of $(d-1)$-faces and vertices that appear in a bounded-radius neighborhood of $o$ are a.a.s.\ polylogarithmic in $n$. The local $d$-tree structure is disrupted only if in generating $S$, a $(d-1)$-face at a bounded distance from $o$ selects a vertex in its bounded radius neighborhood. This, however, is very unlikely to occur, so that only with probability $o_n(1)$ is the $k$-radius neighborhood not a $d$-tree. 

It remains to show that the probability of every fixed $d$-tree of depth at most $k$ tends to its probability in the distribution $\textbf{B}_d$. Since the $k$-local neighborhood of the root $o\in S$ is a.a.s.\ a $d$-tree, we can expose it as in a generative process of a $d$-tree. By the construction of $\textbf{B}_d$, there is only one difference between the exposure process of $o$'s neighborhood in $S$ and the generative process of $\textbf{B}_d$. Namely, the independent Poisson$(d)$ variables in $\textbf{B}_d$ are replaced by possibly dependent Binomial variables in $S$. 

Suppose we have already exposed some part of $o$'s  neighborhood in $S$, and we are about to expose the descendant $d$-faces of some $(d-1)$-face $\tau$. 
The situation varies according to whether $\tau$'s selected $d$-face $\sigma_\tau$ has already been exposed, but in this respect there is no difference between the two processes. The difference is that in $\textbf{B}_d$, $\tau$ has a $\mbox{Poi}(d)$ independently distributed number of descendant $d$-faces of type $(B)$. These correspond to the $d$-faces in $S$ that contain $\tau$, that were selected by some other $(d-1)$-face but have not yet been exposed. In $S$, this  number is distributed binomially, where the number of trials is a.a.s.\ $nd-o(n)$ and the success probability is $(1-o(1))/n$. In particular, as $n\to\infty$, this number tends to a $\mbox{Poi}(d)$ variable. Note that in both parameters of this binomial distribution, the error term may depend on the already exposed neighborhood of $o$, so the different degrees could be dependent. However, since this dependency a.a.s.\ only affects the error term, the joint distribution of all these numbers tends to the distribution of independent Poisson variables.
\end{proof}

\subsection{Proof of Theorem \ref{thm:1out}}
Let $S=S_d(n,1)$. Theorem \ref{thm:1out} would follow if we can show that $\E[\beta_{d-1}(S)]=o(n^d)$. Indeed, as we saw in Section \ref{sec:back}, 
$
\beta_d(S)=\beta_{d-1}(S)-\left(|S|-\binom{n-1}{d}\right),
$
and a.a.s.\ $|S|=\binom{n}{d}-o(n^d)$ since there are only $o(n^d)$ duplications. In addition, by Markov's inequality, if $\E[\beta_{d}(S)]=o(n^d)$ then a.a.s.\  $\beta_d(S)=o(n^d)$.

Recall that we denote by $x_{B,o}\in [0,1]$ the spectral measure of the Laplacian of a $d$-tree $B$ with respect to the characteristic vector of its root $o$, measured at the atom $\{0\}$. By Lemma \ref{lem:aux_lim}, 
$$\limsup_{n\to\infty}\frac{1}{\binom{n}{d}}\E[\beta_{d-1}(S)]\le\E[x_{\textbf{B}_d}],
$$
since $S$ locally weakly converges to $\textbf{B}_d$. Therefore, Theorem \ref{thm:1out} follows from the following lemma.
\begin{lemma}
$\E[x_{\textbf{B}_d}]=0.$
\end{lemma}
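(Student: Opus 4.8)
The goal is to show $\E[x_{\textbf{B}_d}] = 0$, where $x_{\textbf{B}_d} = \mu_{\textbf{B}_d, o}(\{0\})$ and the expectation is over the random $d$-tree $\textbf{B}_d$. The tool is the recursion of Lemma \ref{lem:aux_ind}: if the root $o$ is contained in $m$ $d$-faces and $B_{j,i}$ are the $d$-subtrees hanging below them, then $x_B = 0$ as soon as some $j$ has $x_{B_{j,1}} = \cdots = x_{B_{j,d}} = 0$, and otherwise $1/x_B = 1 + \sum_{j=1}^m 1/\sum_{i=1}^d x_{B_{j,i}}$. My plan is to set up a self-consistent system for the distributions of $x$ restricted to the two face-types $(A)$ and $(B)$, and argue that in both cases $x = 0$ almost surely; $\E[x_{\textbf{B}_d}] = 0$ then follows since the root has type $(A)$.

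\textbf{Key steps.} First I would name the quantities: let $p_A = \Pr[x_{B} = 0 \mid B \text{ rooted at a type-}(A)\text{ face}]$ and $p_B = \Pr[x_B = 0 \mid B \text{ rooted at a type-}(B)\text{ face}]$, where the subtrees are distributed as the corresponding subtrees of $\textbf{B}_d$. The structure of $\textbf{B}_d$ (Figure \ref{fig:dtree}) gives the recursive description: a type-$(A)$ root sits in one type-$(A)$ $d$-face whose other $d-1$ subfaces are again type $(A)$, plus $\mathrm{Poi}(d)$ type-$(B)$ $d$-faces each of which has one type-$(B)$ subface and $d-1$ type-$(A)$ subfaces; a type-$(B)$ root sits in $\mathrm{Poi}(d)$ type-$(B)$ $d$-faces, each again with one type-$(B)$ subface and $d-1$ type-$(A)$ subfaces. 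The cleanest route is to show $p_B = 1$, i.e.\ $x_B = 0$ a.s.\ for type-$(B)$ subtrees, and then deduce $p_A = 1$. For the type-$(B)$ case, observe that a type-$(B)$ subtree is, in inclusion-graph terms, a Galton--Watson-like tree whose offspring at the $(d-1)$-face level is $\mathrm{Poi}(d)$ and which, crucially, has expected growth that is subcritical in the relevant sense — indeed, for $d=1$ it is exactly the $\mathrm{Poi}(1)$ Galton--Watson tree of the uniform-spanning-tree limit, which is a.s.\ finite. I expect to prove finiteness (a.s.) of every type-$(B)$ subtree, or more precisely of the set of type-$(B)$ faces reachable before hitting a "dead" branch, and then use that on a finite $d$-tree the Laplacian has a kernel only if the complex is not $d$-acyclic in a way that forces an exposed $(d-1)$-face with the right structure — here I would instead argue directly from Lemma \ref{lem:aux_ind} by induction on the (a.s.\ finite) height: a leaf type-$(B)$ face has $m=0$, giving $1/x_B = 1$, so $x_B = 1 \ne 0$; propagating upward through $1/x_B = 1 + \sum_j 1/\sum_i x_{B_{j,i}}$ with all $x$-values strictly positive keeps $x_B$ strictly positive and bounded. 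This shows type-$(B)$ subtrees have $x > 0$ a.s., which is the opposite of $p_B = 1$, so I must be careful: the correct target is to show that the contribution of the \emph{type-}$(A)$ spine forces $x_{\textbf{B}_d} = 0$.

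\textbf{The real argument.} Reconsidering: the type-$(A)$ faces form an infinite "spine" — from the type-$(A)$ root, the unique type-$(A)$ $d$-face leads to $d-1$ new type-$(A)$ faces, one of which (any of them) continues the spine, and this never terminates. Fix the spine and apply Lemma \ref{lem:aux_ind} along it: if at some type-$(A)$ face the $d-1$ type-$(A)$ children subtrees all had $x = 0$, we would be done by the first clause. Otherwise we get the recursion $1/x_{\textbf{B}_d} = 1 + \sum_{j} 1/\sum_i x_{B_{j,i}}$, where one term $j$ is the type-$(A)$ $d$-face. I would set $u_k$ equal to $x$ of the type-$(A)$ subtree rooted at the $k$-th spine face and show the $u_k$ satisfy a recursion of the form $1/u_k = 1 + Z_k + 1/\big((d-1)\text{ sum involving } u_{k+1}\big)$ with $Z_k \ge 0$ a sum of $\mathrm{Poi}(d)$ i.i.d.\ positive terms from the type-$(B)$ subtrees. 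The key point is that this recursion, unrolled, is a continued fraction that, combined with the a.s.\ infiniteness of the spine and the fact that the $Z_k$ are i.i.d.\ with positive mean, forces $u_0 = 0$: intuitively each level multiplies the "reciprocal mass" by a factor bounded below by something exceeding $1$, so $1/u_0 = \infty$. I would make this rigorous by a monotonicity/truncation argument: approximate $\textbf{B}_d$ by its depth-$k$ truncation $\textbf{B}_d^{(k)}$ (all faces beyond depth $2k$ removed), note $x$ is monotone under this truncation in the appropriate direction by the recursion, compute $x_{\textbf{B}_d^{(k)}}$ via the finite recursion, and show $\E[x_{\textbf{B}_d^{(k)}}] \to 0$ as $k \to \infty$ using that the expected reciprocal grows geometrically. \textbf{The main obstacle} will be this last quantitative step: controlling the continued-fraction recursion for $1/u_k$ through the $\mathrm{Poi}(d)$-weighted sums of type-$(B)$ contributions and proving the geometric blow-up of $\E[1/x_{\textbf{B}_d^{(k)}}]$ (equivalently $\E[x_{\textbf{B}_d^{(k)}}] = e^{-\Omega(k)}$); handling the randomness of the $Z_k$ (which have heavy-ish tails coming from small denominators $\sum_i x_{B_{j,i}}$) rather than a deterministic bound is the delicate part, and I would address it by a second-moment or Laplace-transform estimate on the type-$(B)$ subtree reciprocals, using that those subtrees are a.s.\ finite Galton--Watson-type objects with all the needed moments finite.
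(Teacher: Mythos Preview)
Your proposal has genuine gaps that prevent it from going through for $d\ge 2$, and your route diverges sharply from the paper's.

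\textbf{Structural errors.} First, a type-$(A)$ $d$-face has $d$ type-$(A)$ children, not $d-1$; more importantly, this means that for $d\ge 2$ the type-$(A)$ faces do \emph{not} form a single infinite spine but a branching tree of out-degree $d$. Your continued-fraction unrolling $1/u_k = 1 + Z_k + 1/(\text{something in }u_{k+1})$ is modeled on the $d=1$ case, where the spine is a path and one indeed gets $1/u_0 \ge k$ by telescoping. For $d\ge 2$ the recursion at a type-$(A)$ face reads
\[
\frac{1}{X}=1+\frac{1}{X_1+\cdots+X_d}+\sum_{j=1}^{m}\frac{1}{Y_j+X_{j,1}+\cdots+X_{j,d-1}},
\]
with $d$ independent type-$(A)$ subtree values $X_1,\ldots,X_d$ in the first denominator and $d-1$ more inside each type-$(B)$ term. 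There is no single $u_{k+1}$ to telescope against, and bounding $1/X$ from below by $1/(X_1+\cdots+X_d)$ gives an inequality going the wrong way for a geometric blow-up.

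\textbf{Circularity and the type-$(B)$ branch.} Your claim that type-$(B)$ subtrees are ``a.s.\ finite Galton--Watson-type objects with all the needed moments finite'' is false for $d\ge 2$: each type-$(B)$ $d$-face carries $d-1$ type-$(A)$ grandchildren, so every type-$(B)$ subtree contains the full infinite type-$(A)$ branching structure. Consequently the $Z_k$ in your recursion are built out of the very type-$(A)$ values you are trying to control, and the second-moment/Laplace estimate you gesture at cannot be set up without already knowing something about the distribution of $X$. The truncation-monotonicity step is likewise unjustified: the recursion $x_B^{-1}=1+\sum_j(\sum_i x_{B_{j,i}})^{-1}$ is increasing in the subtree values, so making leaves have $x=1$ (truncation) pushes $x$ \emph{up}, not down, at the next level, and the direction alternates; you would need a genuine spectral argument here, not the raw recursion.

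\textbf{What the paper does instead.} The paper never tries to prove $X=0$ a.s.\ directly. It sets $a=\Pr[X>0]$, $b=\Pr[Y>0]$, derives from the first clause of the recursion the relation $a=(1-(1-a)^d)b$, and then computes $\E[X]$ by an algebraic manipulation of the second clause: writing $\E[X]=a-(\text{two correction terms})$, each correction term is rewritten, via the distributional self-similarity and the Poisson identity $\E[m\,\varphi(m-1)]=d\,\E[\varphi(m)]$, as an expectation of a ratio $\frac{(\cdot)}{Y+X_1+\cdots+X_d}$. Summing these ratios collapses to the probability that at least two of $Y,X_1,\ldots,X_d$ are positive, giving $\E[X]=a-b(1-(1-a)^d)-(1-b)(\text{nonneg})\le 0$. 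Since $X\ge 0$, this yields $\E[X]=0$. The whole argument is an exact expectation computation, with no truncation, no spine, and no moment estimates on auxiliary trees.
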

\begin{proof}
We denote the random variable $X=x_{\textbf{B}_d}$. In addition, let $\textbf{B}^{(B)}_d$ denote a random $d$-tree that is generated exactly like $\textbf{B}_d$ except that the root is of type $(B)$, and we denote the random variable $Y=x_{\textbf{B}^{(B)}_d}$. In addition, we denote $a:=\Pr[X>0]$ and $b:=\Pr[Y>0]$. We also use $X_1,X_2,\ldots$ or $Y_1,Y_2,\ldots$ to denote i.i.d copies of $X$ and $Y$ respectively.

By Lemma \ref{lem:aux_ind}, $X$ has the distribution obtained by the following process. We independently sample $X_1,\ldots,X_d$, a Poi$(d)$ distributed number $m$, and for every $1\le j\le m$, we also sample $Y_j$ and $X_{j,1},\ldots,X_{j,d-1}$. The r.v.\ $X$ equals $0$ if $X_1=\cdots=X_d=0$ or $Y_j=X_{j,1}=\cdots=X_{j,d-1}=0$ for some $j$. It is otherwise computed by the formula in Lemma \ref{lem:aux_ind}:
\[
X=\left(1+\frac 1{X_1+\cdots+X_d}+\sum_{j=1}^{m}\frac 1{Y_j+X_{j,1}+\cdots+X_{j,d-1}}\right)^{-1}.
\]
Clearly, a similar distributional equation can be derived for $Y$.

First, we use these distributional equations in order to derive relations between $a$ and $b$. The probability that $Y_j=X_{j,1}=\cdots=X_{j,d-1}=0$ for some given $j$ is $(1-b)(1-a)^{d-1}.$ Therefore, the probability that for every $1\le j\le m$ this does not hold, where $m\sim \mbox{Poi}(d)$, equals to $e^{-d(1-b)(1-a)^{d-1}}$. In addition, the probability that $X_1=\cdots=X_d=0$ equals to $(1-a)^{d-1}$. Therefore,
$$ a=(1-(1-a)^d)e^{-d(1-b)(1-a)^{d-1}}.$$
By a similar argument, $b=e^{-d(1-b)(1-a)^{d-1}}$, so that
\begin{equation}\label{eqn:ab}
a=(1-(1-a)^d)b.
\end{equation}

Let $P$ be a random variable whose distribution is that of $X_1+\cdots+X_d$, and let $Q,Q_1,Q_2,\ldots$ be random variables whose distribution is that of $Y+X_1+\cdots+X_{d-1}$. In addition, let $m$ be a $\mbox{Poi}(d)$ distributed random variable. We use the distributional equations derived by Lemma \ref{lem:aux_ind} to compute the expectation of $X$.
\begin{align}
\E[X]~=~&\E\left[\frac{\textbf{1}_{\{P>0,~\forall j\in [m]:\;Q_j>0\}}}{ 1+P^{-1}+  \sum_{j=1}^{m}Q_j^{-1} }\right]\nonumber\\
=~&\E\left[{\textbf{1}_{\{P>0,~\forall j\in [m]:\;Q_j>0\}}}\left(1- \frac{P^{-1}+  \sum_{j=1}^{m}Q_j^{-1}}{ 1+  P^{-1}+  \sum_{j=1}^{m}Q_j^{-1}}\right)\right]\nonumber\\
=~&a-
\E\left[\frac{P^{-1}\cdot \textbf{1}_{\{P>0,~\forall j\in [m]:\;Q_j>0\}}}{ 1+  P^{-1}+  \sum_{j=1}^{m}Q_j^{-1}}\right]
-\E\left[\sum_{i=1}^{m}\frac{ Q_i^{-1}\cdot \textbf{1}_{\{P>0,~\forall j:\;Q_j>0\}}}{ 1+  P^{-1}+ Q_i^{-1} +\sum_{\substack{j=1\\ j\neq i}}^{m}Q_j^{-1}}\right]\label{eqn:Eboth}
\end{align}
Let us separately expand the two expectations.
\begin{align}
\label{eqn:E1_1}
\E\left[\frac{P^{-1}\cdot \textbf{1}_{\{P>0,~\forall j\in [m]:\;Q_j>0\}}}{ 1+  P^{-1}+  \sum_{j=1}^{m}Q_j^{-1}}\right]
~=~&
\E\left[\frac{\frac{1}{1+\sum_{j=1}^{m}Q_j^{-1}}}{\frac{1}{1+\sum_{j=1}^{m}Q_j^{-1}}+P}\cdot \textbf{1}_{\{P>0,~\forall j\in [m]:\;Q_j>0\}}\right]\\
=~&\E\left[\frac{Y\cdot\textbf{1}_{\{X_1+\cdots+X_d>0,\; Y>0\}}}{Y+X_1+\cdots+X_d}\right].
\label{eqn:E1_2}
\end{align}
For the equality (\ref{eqn:E1_1}) we multiply both the nominator and the denominator by $P/(1+\sum_{j=1}^{m}Q_j^{-1})$. The distributional equation of $Y$ yields Equation (\ref{eqn:E1_2}). 

The second expectation in (\ref{eqn:Eboth}) requires a little more work.
\begin{align}
\label{eqn:E2_1}
\E\left[\sum_{i=1}^{m}\frac{ Q_i^{-1}\cdot \textbf{1}_{\{P>0,~\forall j:\;Q_j>0\}}}{ 1+  P^{-1}+ Q_i^{-1} +\sum_{\substack{j=1\\ j\neq i}}^{m}Q_j^{-1}}\right]
~=~&\E_m\left[m\cdot\E\left[\frac{ Q^{-1}\cdot \textbf{1}_{\{P>0,~Q>0~,~\forall j:\;Q_j>0\}}}{ 1+  P^{-1}+Q^{-1} + \sum_{j=1}^{m-1}Q_j^{-1}}\right]\right]\\
\label{eqn:E2_2}
~=~& d\cdot\E\left[\frac{ Q^{-1}\cdot \textbf{1}_{\{P>0,~Q>0~,~\forall j:\;Q_j>0\}}}
{ 1+  P^{-1}+Q^{-1} + \sum_{j=1}^{m}Q_j^{-1}}\right]\\
\label{eqn:E2_3}
~=~& d\cdot\E\left[
\frac{ \frac{1}{ 1+  P^{-1}+ \sum_{j=1}^{m}Q_j^{-1}} }{ Q+\frac{1}{ 1+  P^{-1}+ \sum_{j=1}^{m}Q_j^{-1}}}
\cdot \textbf{1}_{\{P>0,~Q>0~,~\forall j:\;Q_j>0\}}
\right]\\
\label{eqn:E2_4}
~=~&d\cdot \E\left[\frac{X\cdot\textbf{1}_{\{Y+X_1+\cdots+X_{d-1}>0,\; X>0\}}}{Y+X_1+\cdots+X_{d-1}+X}\right]\\
\label{eqn:E2_5}
~=~&\E\left[\sum_{i=1}^{d}\frac{X_i \cdot\textbf{1}_{\{Y+X_1+\cdots+X_{i-1}+X_{i+1}+\cdots+X_{d}>0,\; X_i>0\}}}{Y+X_1+\cdots+X_{d}}\right].
\end{align}
Equation (\ref{eqn:E2_1}) follows by linearity of expectation and the symmetry of the $Q_i$'s. To derive (\ref{eqn:E2_2}) we note that for every function $\varphi:\N\to\R,~~\E_m[m\cdot\varphi(m-1)]=d\cdot\E_m[\varphi(m)]$ where $m$ is Poi$(d)$ distributed. Equations (\ref{eqn:E2_3}) and (\ref{eqn:E2_4}) are obtained similarly to (\ref{eqn:E1_1}) and (\ref{eqn:E1_2}). The last equation (\ref{eqn:E2_5}) is derived by linearity of expectation and the symmetry of the $X_i$'s.

Let us return to the main computation of $\E[X]$ by plugging in (\ref{eqn:E1_2}) and (\ref{eqn:E2_5}) into (\ref{eqn:Eboth}).
\begin{align}
\nonumber
\E[X]~=~&
a-
\E\left[ 
\frac{Y\cdot\textbf{1}_{\{X_1+\cdots+X_d>0,\; Y>0\}}+
\sum_{i=1}^{d} X_i \cdot\textbf{1}_{\{Y+X_1+\cdots+X_{i-1}+X_{i+1}+\cdots+X_{d}>0,\; X_i>0\}}
}
{Y+X_1+\cdots+X_d}
\right]\\
~=~& a-b(1-(1-a)^d)-(1-b)(1-(1-a)^d-da(1-a)^{d-1}).
\label{eqn:final}
\end{align}
Equation (\ref{eqn:final}) is derived by the following observation. To compute the expectation in the preceding line we sample $Y,X_1,...,X_d$ independently. If two or more of these variables are positive, the contribution to the expectation is $1$. Otherwise the contribution is $0$. Therefore, the expectation equals the probability that two or more of these variables are non-zero. 

To conclude the proof, we recall that $a=b(1-(1-a)^d)$ by (\ref{eqn:ab}), hence $E[X]\le 0.$
\end{proof}
\subsection{The random $(1-\varepsilon)$-out  $d$-complex }
We turn to prove Theorem \ref{thm:1eps}. Let us denote by $Z(n,m,d)$ the number of $n$-vertex inclusion-minimal $d$-cycles whose number of $d$-faces is $m$. In (\cite{ALLM}, Theorem 4.1), it is proved that there exists some constant $\delta=\delta(d)>0$ such that 
\[
\sum_{m=d+2}^{\delta n^d} Z(n,m,d)\left(\frac{d+1}{n}\right)^m = o(1).
\]
The probability of any complex with $m$ $d$-faces to be a subcomplex of $S$ is at most $\left(\frac{d+1}{n}\right)^m$ since the $d$-faces of $S=S_d(n,1)$ are non-positively correlated. Therefore, a.a.s., the $d$-cycles in $S$ are either of size at least $\delta n^d$ (=large) or are boundaries of a $(d+1)$-dimensional simplex.

Fix some $\varepsilon>0$, and let  $S'=S_d(n,1-\varepsilon)$. Recall that $S'$ is obtained from $S$ by removing every $d$-face independently with probability $\varepsilon$. In order to analyze the $d$-homology of $S'$, we remove the $d$-faces of $S$ sequentially. I.e., we consider a sequence of complexes $S=S_0\supset S_1 \supset ... \supset S_M=S'$, where a.a.s.\ $M=\Theta(n^d)$. In every step $i$ of this process, $\beta_d(S_i)<\beta_d(S_{i-1})$ if and only if the $d$-face we removed participated in a $d$-cycle of $S_i$. If $S_i$ contains a large $d$-cycle, that is not a boundary of a $(d+1)$-simplex, the Betti number decreases with probability bounded away from zero. Since $\beta_d(S_0)=o(n^d)$ and $M=\theta(n^d)$, the probability that any large $d$-cycle survives the process is negligible.

\section{Discussion and open questions}
\label{sec:open}
The study of hypertrees raises many open questions. Here are some that concern enumeration and randomized constructions.

\begin{itemize}
\item There is no reason to believe that the bounds in Theorem \ref{thm:1} are tight. In fact, it is known \cite{LAB} that a similar argument for $d=1$ does not yield a bound of $((1-o(1)n)^n$. However, as mentioned in the introduction, it is conceivable that Conjecture \ref{conj:main} can be answered using Theorem \ref{thm:1} coupled with non trivial upper bounds for the number of $d$-collapsible hypertrees. It is also interesting to improve our lower bound for $|\Cnd|$ which at the moment is supported on non-evasive complexes that have a very irregular vertex-degree sequence (See \cite{nonevase}).

\item Is it possible to efficiently sample $d$-hypertrees uniformly at random? It is suggestive to do this using rapidly mixing Markov chains (e.g., \cite{sinclair}), possibly the {\em base-exchange} Markov Chain $\Omega=\Omega_{n,d}$ that is of interest for matroids in general \cite{balanced}. The states of $\Omega$ are all the $n$-vertex $d$-hypertrees. To proceed from a $d$-hypertree $T$, we select a $d$-face $\sigma\notin T$ uniformly at random, and replace $T$ by $T\setminus\{\tau\}\cup\{\sigma\}$, where  $\tau$ is a random $d$-face in the unique $d$-cycle of $T\cup\{\sigma\}$. The stationary distribution of $\Omega$ is uniform, but we do not know whether it is rapidly mixing. 

\item Can the random $1$-out complex help us improve our estimates for the number of $d$-hypertrees? The problem boils down to bounding the typical {\em permanent} of such a $d$-complex $T$. Namely, the number of injective functions from $F_d(T)$ to $F_{d-1}(T)$ that map  every $d$-face to one of its subfaces. In other words, the permanent of $T$ is the number of maximum matchings in $T$'s inclusion graph. We wonder if the typical permanent of $S_d(n,1)$ can be bounded in terms of its local weak limit $\textbf{B}_d$ (See \cite{mik}).  

\item We know even less about random generation of $d$-collapsible hypertrees. It is possible to restrict the base-exchange chain $\Omega_{n,d}$ to $d$-collapsible hypertrees, but we do not even know whether the restricted chain is connected, not to speak of rapid mixing. There is also an interesting greedy-random process that suggests itself, where we sequentially add a random $d$-face to the current complex provided that $d$-collapsibility is not violated. How many $d$-faces does this process acquire before it halts? What is the combinatorial structure of the final complex?

\item Other types of hypertrees such as contractible, $\Z$-hypertrees, and $\F_2$-hypertrees can be considered in all these contexts. For instance, one can ask whether Theorem \ref{thm:1out} also holds over $\F_2$ coefficients. In particular, applying a first moment method on the $\F_2$-cohomology of $S_2(n,1)$ yields the following interesting question. For  a fixed graph $G$ and a pair of vertices $i,j$, let $$\varphi_{i,j}(G) := \frac{1}{n-2}\left|\{k\notin\{i,j\}~:~i,j,k\mbox{~span an even number of edges in }G\}\right|.$$ In words, $\varphi_{i,j}$ is the probability that the selection of the edge $ij$ in $S_2(n,1)$ does not exclude $G$ from being a cocycle of the complex. Prove that
\[
\sum_{\mbox{$G$ an $n$-vertex graph}}
\left[\prod_{i,j}\varphi_{i,j} \right]=2^{o(n^2)}
\]
It is conceivable that this sum is of order $2^{\Theta(n)}$.
\end{itemize}

\noindent{\bf Acknowledgement.}\, The authors would like to thank Gil Kalai for many useful discussions and for suggesting the inductive construction leading to Equation (\ref{eqn:cnd}).

\end{document}